\title{Local semicircle law at the spectral edge for Gaussian $\beta$-ensembles}
\author{Percy Wong}
\begin{document}
\maketitle
\newtheorem{thm}{Theorem}
\newtheorem{prop}{Proposition}
\newtheorem{lem}{Lemma}
\newtheorem{cor}{Corollary}
\newtheorem{defn}{Definition}
\newtheorem{rmk}{Remark}
%\email{pakwong@math.princeton.edu}
\begin{abstract}
We study the local semicircle law for Gaussian $\beta$-ensembles at the edge of the spectrum.  We prove that at the almost optimal level of $n^{-2/3+\epsilon}$, the local semicircle law holds for all $\beta \geq 1$ at the edge.  The proof of the main theorem relies on the calculation of the moments of the tridiagonal model of Gaussian $\beta$-ensembles up to the $p_n$-moment where $p_n = O(n^{2/3-\epsilon})$.  The result is the analogous to the result of Sinai and Soshnikov \cite{SS} for Wigner matrices, but the combinatorics involved in the calculations are different.  
%\keywords{random matrices, semicircle law, log gases}
\end{abstract} 

\section{Introduction and Summary}

Our goal in this paper is to prove a local semicircle law at the level of
$O(n^{2/3-\epsilon})$ near the spectral edges of the Gaussian
$\beta$-ensembles.  The corresponding problem for Wigner matrices at the edge has been established by Sinai and Soshnikov \cite{SS} and in the bulk by Erd\"os, Schlein and Yau \cite{ESY}, \cite{ESY2}.

\begin{defn}
Let $\beta \geq 1$, the Gaussian $\beta$-ensemble is an ensemble in $\mathbb{R}$ that have the following probability density:
\begin{equation} \label{eqn:beta}
 df(\lambda_1,\hdots,\lambda_n) = G_{n,\beta} \prod_{1\leq i<j\leq n} |\lambda_i
- \lambda_j|^\beta
\exp(-\frac{\beta}{4}\sum_1^n \lambda_i^2) \prod_i d\lambda_i 
\end{equation}
where $G_{n,\beta}$ is a normalization constant.
\end{defn}

For $\beta = 1,2,4$, this probability distribution corresponds to the
eigenvalues of GOE, GUE and GSE respectively.  Other known and related
results regarding the $\beta$-ensembles will be discussed in the next
section.  Using the moments method, one can establish the Wigner semicircle law
for the $\beta$-ensembles, e.g. \cite{D}:

Suppose $\lambda_i, i = 1,\hdots,n$ follow the distribution of a
$\beta$-ensemble.  Consider the rescaled eigenvalues $\tilde{\lambda_i} =
\frac{\lambda_i}{2\sqrt{n\beta}}$ and the empirical distribution function:
\begin{displaymath}
 N_n(\lambda) = \frac{1}{n}\#\{k:\tilde{\lambda_k} <
\lambda\}
\end{displaymath}
We have
\begin{displaymath}
 \lim_{n\rightarrow\infty} N_n(\lambda) = \int_{-\infty}^\lambda \rho(u)du
\end{displaymath}
where
\begin{displaymath}
 \rho(u) = \left \{ \begin{array}{ll}
                     0 & \textrm{for } u > 1 \textrm{ or } u < -1, \\
		    \frac{2}{\pi}\sqrt{1-u^2} & \textrm{for } -1 \leq u \leq 1.
                    \end{array} \right.
\end{displaymath}

Take an $r_n$-neighborhood $O_n$ of right end of the spectrum $\lambda =
1$, where $r_n = O(n^{-2/3 + \epsilon})$  for some $\epsilon > 0$. 
Normalizing $\tilde{\lambda}_k = 1 - \theta_k r_n$ and 
\begin{displaymath}
 \mu_n(\theta_k) = \frac{1}{nr_n^{3/2}} 
\end{displaymath}
at each $\theta_k$, we obtain a measure $\mu_n$ on the real line such that
$\mu_n(\mathbb{R}) = r_n^{-3/2}$.  The main result of this paper is the proof of the following local semicircle law at the edge of the spectrum:

\begin{thm} \label{thm:main}
As $n\rightarrow \infty$, $\mu_n$ converge weakly in probability on each
finite interval to a measure $\mu$ concentrated on $\mathbb{R}_+$ and almost
continuous with respect to the Lesbesgue measure:
\begin{displaymath}
 \frac{d\mu}{dx} = \left \{ \begin{array}{ll} 
                            \frac{2\sqrt{2}}{\pi}\sqrt{x} & \textrm{if $x>0$}\\
			    0 & \textrm{otherwise}
                            \end{array} \right.
\end{displaymath}
If $r_n > n^{\epsilon-2/3}$ for some $\epsilon > 0$, then the measures converge
vaguely to $\mu$ with probability $1$.
\end{thm}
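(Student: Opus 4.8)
\emph{Overall strategy.} The plan is to pass to the Dumitriu--Edelman tridiagonal representation of (\ref{eqn:beta}) and then carry out a high-moment computation in the spirit of Sinai--Soshnikov \cite{SS}. Under that representation the spectrum of (\ref{eqn:beta}) is that of a real symmetric tridiagonal matrix $H_n$ with i.i.d.\ centred Gaussian diagonal entries and independent $\chi$-distributed off-diagonal entries, the $k$-th one with $(n-k)\beta$ degrees of freedom, up to a fixed normalization; set $T_n=H_n/(2\sqrt{n\beta})$, so the eigenvalues of $T_n$ are the $\tilde\lambda_k$. I would reduce ``$\mu_n\to\mu$ weakly in probability on each finite interval'' to two facts: (i) $\mathbb{E}\!\int f\,d\mu_n\to\int f\,d\mu$ for $f$ in a determining class, and (ii) $\mathrm{Var}\big(\int f\,d\mu_n\big)\to0$. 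For (i) it is enough to take $f(\theta)=e^{-t\theta}$, $t>0$, since Laplace transforms determine a measure on $\mathbb{R}_+$ and the mass of $\mu_n$ on $\{\theta<0\}$ (eigenvalues above the edge) will be shown negligible. The bridge to moments is that, for an even integer $p_n$ with $p_n r_n\to t$ and $p_n=O(n^{2/3-\epsilon})$, one has $\tilde\lambda_k^{\,p_n}=(1-\theta_k r_n)^{p_n}=e^{-t\theta_k}(1+o(1))$ uniformly over the edge window, so $\int e^{-t\theta}\,d\mu_n$ equals, up to $o(1)$ and a factor $\tfrac12$ for the symmetric left edge, $\tfrac{1}{2nr_n^{3/2}}\,\mathrm{Tr}(T_n^{p_n})$. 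Everything thus reduces to the asymptotics of $\mathbb{E}\,\mathrm{Tr}(T_n^{p_n})$ and of $\mathbb{E}\,(\mathrm{Tr}\,T_n^{p_n})^2$ for $p_n$ up to order $n^{2/3-\epsilon}$.

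\emph{The moment asymptotics.} Expanding, $\mathrm{Tr}(T_n^{p_n})=\sum_{w}\prod_{\text{steps of }w}(\text{entry})$, the sum being over closed walks $w$ of length $p_n$ on the path $\{1,\dots,n\}$ with ``loop'' (diagonal) steps allowed. Taking expectations and using independence, the vanishing of odd Gaussian moments, the fact that a closed walk on a path crosses each edge an even number of times, and $\mathbb{E}[\chi_M^{2j}]=M(M+2)\cdots(M+2j-2)=M^{j}\prod_{i<j}(1+2i/M)$, one checks that each diagonal step carries a factor $O(n^{-1/2})$, so walks with loops lose a factor $O(p_n^{3/2}/n)$ against the main term, while the corrections $\prod_{i<j}(1+2i/M)$ and the replacement of lower-boundary walk counts by ballot numbers cost only $O(p_n^{3/2}/n)=O(n^{-3\epsilon/2})$. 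The leading term is
\begin{displaymath}
\sum_{v_0=1}^{n}\;\sum_{w}\;\prod_{e\in w}\Big(\frac{n-\sigma(e)}{4n}\Big)^{m_e(w)/2},
\end{displaymath}
where the inner sum runs over $\pm1$ closed walks $w$ of length $p_n$ from $v_0$, $\sigma(e)$ is the lower endpoint of the edge $e$, and $m_e(w)$ the number of crossings of $e$. Since such a walk stays within $O(\sqrt{p_n})$ of $v_0$, its weight is $(1-v_0/n)^{p_n/2}(1+o(1))$ as long as it does not feel the corner at $1$; the number of $\pm1$ closed walks of length $p_n$ from $v_0$ is $\binom{p_n}{p_n/2}$ in the interior and the reflected count $\binom{p_n}{p_n/2}-\binom{p_n}{p_n/2+v_0}$ near the corner. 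Summing over $v_0$ through the integral $\int_0^1(1-x)^{p_n/2}\,dx=\tfrac{2}{p_n+2}$ and using Stirling, one gets
\begin{displaymath}
\mathbb{E}\,\mathrm{Tr}(T_n^{p_n})=(1+o(1))\,\frac{n}{\sqrt{\pi}\,(p_n/2)^{3/2}},
\end{displaymath}
uniformly over even $p_n\to\infty$ with $p_n=O(n^{2/3-\epsilon})$.

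\emph{Passing to the measure.} Combining this with $p_n r_n\to t$ gives $\mathbb{E}\!\int e^{-t\theta}\,d\mu_n\to\sqrt{2/\pi}\,t^{-3/2}=\int_0^\infty e^{-tx}\,\tfrac{2\sqrt2}{\pi}\sqrt x\,dx$ for every $t>0$. Together with a one-sided edge bound --- $\mathbb{P}(\exists k:\tilde\lambda_k>1+Mr_n)\le(1+Mr_n)^{-p'_n}\,\mathbb{E}\,\mathrm{Tr}(T_n^{p'_n})\to0$ for a slightly larger power $p'_n$ --- and the estimate $e^{-t\theta_k}\le e^{-t/r_n}\to0$ for eigenvalues below $0$, uniqueness of the Laplace transform identifies the finite-interval weak (and vague) limit of $\mathbb{E}[\mu_n]$ as $\mu$. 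For convergence in probability I would run the analogous but heavier computation of $\mathbb{E}\,(\mathrm{Tr}\,T_n^{p_n})^2$ as a sum over \emph{pairs} of walks and show that the ``connected'' pairs are of strictly lower order, whence $\mathrm{Var}\big(\mathrm{Tr}\,T_n^{p_n}\big)=o\big((\mathbb{E}\,\mathrm{Tr}\,T_n^{p_n})^2\big)$ and $\mathrm{Var}\big(\int f\,d\mu_n\big)\to0$. For the almost-sure statement when $r_n\ge n^{\epsilon-2/3}$ I would either push the same combinatorics to a high even moment $\mathbb{E}\big[(\mathrm{Tr}\,T_n^{p_n}-\mathbb{E}\,\mathrm{Tr}\,T_n^{p_n})^{2m}\big]$, or invoke the log-Sobolev / Bakry--\'Emery concentration available because (\ref{eqn:beta}) is $\tfrac{\beta}{2}$-log-concave for $\beta\ge1$, and finish by Borel--Cantelli.

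\emph{The main obstacle.} The hard part is the \emph{uniform} control of the error terms in the walk sums as $p_n$ grows all the way to $n^{2/3-\epsilon}$: bounding the contribution of edges traversed many times (where $\mathbb{E}[\chi_M^{2j}]$ departs badly from $(\mathbb{E}\chi_M^2)^j$), of inserted diagonal loops, of the truncation of walks at the corner vertex $1$, and of walks straying toward the far end $n$ (where the $\chi$-parameters are small), and then redoing all of this for the connected two-walk --- and, for the a.s.\ claim, higher --- diagrams. This is precisely where the combinatorics diverges from \cite{SS}: here the relevant walks live on a one-dimensional path carrying the position-dependent weights $(\tfrac{n-\sigma(e)}{4n})^{m_e/2}$, and the spectral edge is generated by walks anchored near the corner of the matrix, rather than by the Dyck-path/constant-variance mechanism of the Wigner setting.
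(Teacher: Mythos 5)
Your proposal is correct in outline and follows the same broad strategy as the paper --- Dumitriu--Edelman tridiagonal model, Laplace transforms, and a reduction of Theorem~\ref{thm:main} to the moment asymptotics of Theorem~\ref{thm:mom} --- but the combinatorial core is genuinely different. The paper's decisive shortcut is Lemma~\ref{lem:simp}, Dumitriu's identity $\mathbb{E}\,\mathrm{Tr}A_n^k = n\,\mathbb{E}(A_n^k)_{11}$, which reduces the whole trace to walks anchored at the corner vertex $1$, i.e.\ to Dyck paths; there all the relevant off-diagonal weights are $\chi^2_{(n-j)\beta}\approx n\beta$ with $j\lesssim\sqrt{p_n}$, the factor $p_n^{-3/2}$ drops directly out of the Catalan number $C_{p_n/2}$, and the only estimates left are the Bernoulli-bridge/random-walk tail bounds of Propositions~\ref{prop:hor} and~\ref{prop:ver} controlling paths that stray too far right or dwell too long on one level. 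You never invoke Lemma~\ref{lem:simp}; instead you sum over all starting vertices $v_0$ and recover $p_n^{-3/2}$ as a product of a $p_n^{-1}$ from the Laplace-type integral $\int_0^1(1-x)^{p_n/2}\,dx$ over $v_0$ and a $p_n^{-1/2}$ from Stirling on the unrestricted walk count $\binom{p_n}{p_n/2}$. This does give the same answer and is arguably closer in spirit to a generic tridiagonal ensemble (so potentially more portable, e.g.\ to $\beta$-Laguerre), but it forces you to track position-dependent weights and the reflection correction at the corner, all of which the paper's route sidesteps by Lemma~\ref{lem:simp}. On the probabilistic side your plan to compute $\mathbb{E}[(\mathrm{Tr}\,T_n^{p_n})^2]$ via paired (connected vs.\ disconnected) walks in the style of Sinai--Soshnikov is the honest route; the paper's appendix instead writes $\mathbb{E}[(\mathrm{Tr}A^{2s_n})^2]\le 2\,\mathbb{E}\,\mathrm{Tr}A^{4s_n}$, which cannot be right as stated (the two sides have orders $(nr_n^{3/2})^2$ and $nr_n^{3/2}$ respectively), so some version of your two-walk estimate is actually needed there. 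If you revise, picking up Lemma~\ref{lem:simp} early will shrink the combinatorics considerably, while the two-point computation you sketch should be carried out rather than suppressed.
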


\begin{rmk}
The reason we called the above theorem a local semicircle law is that under the semicircle law, one can show that for a neighborhood of $1$ of size of some fixed number $\delta$, the proportion of normalized eigenvalues in that neighborhood converges to the corresponding integral of the semicircle law.  The main theorem states that the size of this neighborhood can be much finer, at the scale of $n^{-2/3+\epsilon}$ and the convergence will still hold.  In other words, for a neigborhood of size $r_n$ near $1$, there are $\Omega{nr_n^{3/2}}$ eigenvalues.  If one rescales the total number in this interval by $nr_n^{3/2}$, this number with converge in probability to the appropriate integral of the semicircle law.  Note that one would not expect the semicircle shape to hold at the edge when $r_n = n^{-2/3}$.  In this sense, the local semicircle law above is almost optimal.
\end{rmk}

The theorem above will be proven using the moment method, similar to the method by Sinai and Soshnikov \cite{SS}:
\begin{thm} \label{thm:mom}
 Let $A_n$ be a random matrix whose eigenvalues are given by
equation (\ref{eqn:beta}) scaled by a factor of $1/2\sqrt{n\beta}$. 
Let $p_n \rightarrow \infty$ as $n\rightarrow\infty$ and $p_n =
O(n^{2/3-\epsilon})$ for some $\epsilon > 0$,
then
\begin{displaymath}
 \mathbb{E}(\textrm{Tr} A_n^{p_n}) = \left \{ \begin{array}{ll}
					     2^{3/2}n{(\pi p_n^3)}^{-1/2}
(1+o(1)) & \textrm{if $p_n$ is even,} \\
 0, & \textrm{if $p_n$ is odd.}
                                            \end{array} \right.
\end{displaymath}

\end{thm}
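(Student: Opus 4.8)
The plan is to use the tridiagonal (Dumitriu--Edelman) matrix model for the Gaussian $\beta$-ensemble. Since $\mathbb{E}\,\mathrm{Tr}\,A_n^{p_n}$ depends only on the eigenvalue law, we may take $A_n$ to be the symmetric tridiagonal matrix $J$ with independent entries: the off-diagonal ones are $J_{k,k+1}=J_{k+1,k}=\frac{1}{2\sqrt{n\beta}}\,\chi_{(n-k)\beta}$ for $1\le k\le n-1$ (here $\chi_d$ is a $\chi$-variable with $\mathbb{E}\chi_d^{2}=d$) and the diagonal ones are independent centered Gaussians of variance $O(1/n)$. This is the normalization under which the limiting spectrum is $[-1,1]$: since $J_{k,k+1}^{2}\approx\frac14(1-k/n)$, averaging over $k$ the arcsine law on $[-2J_{k,k+1},2J_{k,k+1}]$ reproduces $\rho(u)=\frac2\pi\sqrt{1-u^{2}}$. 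Expanding the trace over closed walks,
\begin{equation*}
\mathbb{E}\,\mathrm{Tr}\,A_n^{p}=\sum_{i=1}^{n}\ \sum_{w\,:\,i\to i,\ |w|=p}\ \mathbb{E}\Big[\textstyle\prod_{\text{steps of }w}J_{\text{step}}\Big],
\end{equation*}
where $w$ runs over closed walks of length $p$ on the path graph $\{1,\dots,n\}$, each step moving $\pm1$ (using an off-diagonal entry) or staying put (using a diagonal entry).

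Because the path graph is a tree, every edge $(k,k+1)$ is traversed an even number of times, say $2f_k$, by a closed walk; independence then factors the monomial's expectation as $\prod_k\mathbb{E}[J_{k,k+1}^{2f_k}]\cdot\prod_j\mathbb{E}[J_{jj}^{g_j}]$, where $g_j$ is the number of diagonal steps at vertex $j$. Since the $J_{jj}$ are centered Gaussian, the second product vanishes unless every $g_j$ is even; when $p$ is odd the total number of diagonal steps is odd, some $g_j$ is odd, and so $\mathbb{E}\,\mathrm{Tr}\,A_n^{p}=0$, settling that case. For $p=2m$ even, using $\mathbb{E}[\chi_d^{2f}]=\prod_{\ell=0}^{f-1}(d+2\ell)$ and $\sum_kf_k=m$ gives
\begin{equation*}
\prod_k\mathbb{E}[J_{k,k+1}^{2f_k}]=\frac{1}{4^{m}}\Big(\prod_k(1-k/n)^{f_k}\Big)R(w),\qquad R(w):=\prod_k\prod_{\ell=0}^{f_k-1}\Big(1+\frac{2\ell}{(n-k)\beta}\Big)\ge1.
\end{equation*}

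The main term comes from walks with no diagonal steps, i.e. closed $\pm1$ walks of length $2m$, and I would extract it in three reductions. (i) Replacing $\prod_k(1-k/n)^{f_k}$ by $e^{-im/n}$ costs only $1+o(1)$: the walk's excursions about $i$ have height $O(\sqrt m)$ outside a negligible set of walks, and $m^{3/2}/n\to0$. (ii) $R(w)=1+o(1)$ after summation: the occupation numbers of a length-$2m$ lattice bridge typically satisfy $\sum_kf_k^{2}=O(m^{3/2})$, whence $R(w)\le\exp(cn^{-1}\sum_kf_k^{2})=1+o(1)$ there, while the rare walks that concentrate many crossings on a few edges (where $\sum_kf_k^{2}$ is up to $m^{2}$ and $R$ truly blows up) are far too few to count. (iii) The number of closed $\pm1$ walks of length $2m$ from $i$ is $\binom{2m}{m}$ minus reflection terms at the hard end $k=1$, and those sum to $O(1)$ over $i$ because $\sqrt m\ll n/m$. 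Putting these together,
\begin{equation*}
\mathbb{E}\,\mathrm{Tr}\,A_n^{2m}=\frac{1}{4^{m}}\binom{2m}{m}\sum_{i\ge1}e^{-im/n}\,(1+o(1))=\frac{1}{\sqrt{\pi m}}\cdot\frac{n}{m}\,(1+o(1))=\frac{2^{3/2}n}{\sqrt{\pi p_n^{3}}}\,(1+o(1)).
\end{equation*}
Walks with $2j\ge2$ diagonal steps lose a factor $O(n^{-j})$ from $\mathbb{E}[J_{jj}^{2j}]$ while gaining only $O(\mathrm{poly}(m))$ in the number of ways to place the pauses, so they contribute $o(1)$ relative to the main term.

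The principal obstacle is reduction (ii): for the full range $p_n=O(n^{2/3-\epsilon})$ one must bound the total weight $\sum_wR(w)\,\mathbf{1}[\sum_kf_k^{2}>L]$ of walks with anomalously large occupation numbers. This needs sharp large-deviation/combinatorial control of the occupation-number vector of lattice bridges --- the counterpart here of the refined tree-and-Catalan bookkeeping of Sinai--Soshnikov, but genuinely different because the suppression of bad walks now comes from concentration of random-walk local times on the line rather than from powers of the matrix dimension.
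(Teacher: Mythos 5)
Your architecture is sound and in fact a legitimate variant of the paper's: where the paper invokes Dumitriu's identity $\mathbb{E}(\mathrm{Tr}\,A_n^k)=n\,\mathbb{E}(A_n^k)_{11}$ to reduce to Dyck paths from vertex $1$ (Catalan count $C_m$), you sum over all starting vertices and count free closed $\pm1$ walks ($\binom{2m}{m}$) damped by the variance profile $e^{-im/n}$; the two bookkeepings agree because $\sum_i e^{-im/n}\sim n/m$ while $C_m=\binom{2m}{m}/(m+1)$, and your constants come out right. Your odd-moment argument (every edge of the path graph is crossed an even number of times, so only the centered diagonal Gaussians can carry an odd power) is cleaner than the paper's symmetrization of the $\chi$-distribution. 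However, the estimate you defer in step (ii) is not a peripheral technicality --- it is the entire content of the paper's combinatorial work, and as written your proof does not contain it. The paper's resolution is: map Dyck paths to Bernoulli bridges (Lemma \ref{lem:BEBB}), transfer events to an unconditioned random walk via the Khorunzhiy--Marckert comparison (Lemma \ref{lem:BBRW}), and use the explicit law $q_{2k,r}=2^{-(2k-r)}\binom{2k-r}{k}$ of the number of returns to a level to prove $\mathbb{P}(\max_v T_v\ge k^{1/2+\epsilon})\le C\exp(-ck^{2\epsilon'})$ (Proposition \ref{prop:ver}), together with the max-height bound (Proposition \ref{prop:hor}); one must then still integrate the superpolynomially rare exceptional walks against their genuinely large weights $R(w)$ to see they contribute $o(n^{p_n}\cdot 4^{-p_n/2}\binom{p_n}{p_n/2})$. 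Until you supply an equivalent of these two tail bounds, reductions (i) and (ii) are assertions, not proofs.

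A second, unflagged gap: your dismissal of walks with diagonal steps would fail as stated. Placing $2j$ pauses freely among $2m$ time slots gives $\binom{2m}{2j}=O(m^{2j})$ configurations against a gain of $O(n^{-j})$ from the Gaussian moments, and $(m^{2}/n)^{j}\to\infty$ in the range $p_n=O(n^{2/3-\epsilon})$. What saves the argument (and what the paper's loop proposition uses) is that the pauses must pair up at a common vertex, so the count is governed by $\sum_v\binom{T_v}{2}\sim\tfrac12\sum_v T_v^{2}$ per pair, and with $T_v=O(\sqrt{m})$ with overwhelming probability this yields $(m^{3/2}/n)^{j}\to0$. In other words, the occupation-number concentration you identify as "the principal obstacle" for $R(w)$ is also indispensable for the diagonal-step walks; the phrase "$O(\mathrm{poly}(m))$ ways to place the pauses" hides exactly the exponent that decides convergence versus divergence. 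Once you prove the local-time tail bound and route both (ii) and the pause-counting through it, the rest of your outline closes.
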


However we shall see in the proof that the combinatorics involved
is different and simpler in our case.  The derivation of theorem
\ref{thm:main} from theorem \ref{thm:mom} is the same as in \cite{SS} and will
be given in the appendix.  It should also be remarked that the result in this paper is slightly weaker than that in \cite{SS}, as in their paper, only $r_n << n^{2/3}$ is required.

The structure of the paper is the following.  We first introduce a
matrix model, proposed by Dumitriu and Edelman \cite{DE}, for the
$\beta$-ensembles and recall some related results.  We shall then prove
theorem \ref{thm:mom} in section \ref{sec:proof}.

For the rest of the paper, we shall use the following notion of an event depending on some index $n$ having overwhelming probability:
\begin{defn}
We say an event $E$ holds with overwhelming probability if for all $n$, $\mathbb{P}(E) \geq 1 - O_C(n^{-C})$ for every constant $C$.
\end{defn}
It should be observed that a union of $n^k$ events of overwhelming probability for some fixed $k$ still holds with overwhelming probability.

\section{Gaussian Beta Ensembles}
As mentioned above, the special values of $\beta = 1,2,4$ corresponds to
eigenvalues of GOE, GUE and GSE respectively and there is an extensive
literature on them.  For the edge of the spectrum, Tracy and
Widom \cite{TW}, \cite{TW2} has been able to establish that, upon rescaling and
centering,
the top eigenvalue distribution converges to what is known as the Tracy-Widom
distribution.  Later Soshnikov \cite{S} proved the edge universality for Wigner
matrices.  However, the case of general $\beta$ remains unknown until the
recent paper by Ramirez, Rider and Virag \cite{RRV}.  One of the obstacles for
studying $\beta$-ensembles previously is the lack of matrix formulation for
them.  In their paper \cite{DE}, Dumitriu and Edelman succeeded in writing down
a tridiagonal random matrix model whose eigenvalue distribution is given by equation
(\ref{eqn:beta}):

\begin{thm} [Dumitriu-Edelman]
 Consider the matrix given by 
\begin{equation} \label{eqn:mat}
 A_{n,\beta} = \left( \begin{array}{ccccccc}
               N(0,2) & \chi_{(n-1)\beta} & 0 & \ldots & \ldots & \ldots &  0 \\
	       \chi_{(n-1)\beta} & N(0,2) & \chi_{(n-2)\beta} & 0 & \ldots &
\ldots & 0 \\
		0 & \chi_{(n-2)\beta} & N(0,2) & \chi_{(n-3)\beta}
& 0 & \ldots & 0 \\
		\vdots & \ddots & \ddots & \ddots & \ddots & \ddots & \vdots \\
	      0 & \ldots & 0 & \chi_{3\beta} & N(0,2) & \chi_{2\beta} & 0 \\
	      0 & \ldots & \ldots & 0 & \chi_{2\beta} & N(0,2) & \chi_\beta \\
	      0 & \ldots & \ldots & \ldots & 0 & \chi_\beta & N(0,2)
              \end{array} \right)
\end{equation}
where $N(0,2)$ denotes a random variable whose distribution follows the Gaussian
distribution with mean $0$ and variance $2$; $\chi_k$ is a random variable having a chi distribution with $k$ degree of freedom.  The upper triangular
part of the matrix consists of independent random variables and the matrix is
symmetric.  The eigenvalues of this matrix follows the distribution of the
$\beta$-ensemble.
\end{thm}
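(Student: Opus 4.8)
\textbf{Proof strategy (after Dumitriu and Edelman \cite{DE}).} Write the independent entries of $A_{n,\beta}$ as $a_1,\dots,a_n$ (diagonal) and $b_1,\dots,b_{n-1}>0$ (off-diagonal), so that they have densities proportional to $e^{-a_i^2/4}$ and to $b_j^{(n-j)\beta-1}e^{-b_j^2/2}$ respectively. Since $\operatorname{Tr}(A_{n,\beta}^2)=\sum_i a_i^2+2\sum_j b_j^2=\sum_i\lambda_i^2$, where $\lambda_1>\cdots>\lambda_n$ are the eigenvalues, the joint law of $(a,b)$ on $\mathbb{R}^n\times\mathbb{R}_{>0}^{n-1}$ has density proportional to
\[ \Big(\prod_{j=1}^{n-1}b_j^{(n-j)\beta-1}\Big)\,\exp\!\Big(-\tfrac14\sum_{i=1}^n\lambda_i^2\Big). \]
The plan is to push this law forward under the spectral map $A_{n,\beta}\mapsto(\lambda,q)$, where $q=(q_1,\dots,q_n)$ is the first row of the orthogonal eigenvector matrix, and then to integrate out $q$.

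\emph{Bijectivity of the spectral map.} For a Jacobi matrix with strictly positive off-diagonal entries, $e_1$ is a cyclic vector, and the first coordinate of every eigenvector is nonzero (if $v_1=0$, the eigenvalue equation gives $b_1v_2=0$, hence $v_2=0$, and inductively $v=0$). Hence the signs of the eigenvectors can be fixed so that $q_i>0$ for all $i$, in which case $\sum_i q_i^2=1$; conversely, Gram--Schmidt applied to $1,x,x^2,\dots$ in $L^2\big(\sum_i q_i^2\delta_{\lambda_i}\big)$ reconstructs $(a,b)$ from $(\lambda,q)$. This identifies $\{(a,b):b_j>0\}$ diffeomorphically with $\{\lambda_1>\cdots>\lambda_n\}\times\{q_i>0,\ \sum_i q_i^2=1\}$, the second factor parametrized by $q_1,\dots,q_{n-1}$, and the count $n+(n-1)=2n-1$ matches the number of entries.

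\emph{Change of variables.} The algebraic input is the Hankel identity for the moments $m_k=\sum_i\lambda_i^k q_i^2=\langle A_{n,\beta}^k e_1,e_1\rangle$. Writing the Hankel matrix as $V\operatorname{diag}(q_i^2)V^{T}$ with $V=(\lambda_i^{\,k})$ Vandermonde gives $\det(m_{i+j})_{i,j=0}^{n-1}=\prod_{i<j}(\lambda_i-\lambda_j)^2\prod_k q_k^2$, while the classical relation between Hankel determinants and recurrence coefficients gives $\det(m_{i+j})_{i,j=0}^{n-1}=\prod_{j=1}^{n-1}b_j^{2(n-j)}$; hence $\prod_j b_j^{(n-j)\beta}=\big(\prod_{i<j}|\lambda_i-\lambda_j|\,\prod_k q_k\big)^{\beta}$. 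A direct computation of the Jacobian of the spectral map (equivalently, of the moment map $(\lambda,q)\mapsto(m_1,\dots,m_{2n-1})$ followed by $(m_k)\mapsto(a,b)$) produces a closed-form factor built from the $b_j$ and $q_k$ which, together with this identity, converts the density into one proportional to
\[ \prod_{i<j}|\lambda_i-\lambda_j|^{\beta}\,\exp\!\Big(-\tfrac14\sum_{i=1}^n\lambda_i^2\Big)\,\prod_{k=1}^{n}q_k^{\beta-1} \]
on the region above. Integrating $\prod_k q_k^{\beta-1}$ over the positive part of the unit sphere is a Dirichlet integral contributing only a finite constant independent of $\lambda$, so the marginal of $(\lambda_1,\dots,\lambda_n)$ is proportional to $\prod_{i<j}|\lambda_i-\lambda_j|^{\beta}\exp(-\tfrac14\sum_i\lambda_i^2)$, which is \eqref{eqn:beta} up to the harmless rescaling $\lambda\mapsto\sqrt{\beta}\,\lambda$ (absorbed into $G_{n,\beta}$; the factor $\beta$ it introduces cancels in the later normalization by $1/2\sqrt{n\beta}$).

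\emph{Main obstacle.} Everything except the Jacobian of the tridiagonal-to-spectral-data map is bookkeeping; that computation is the technical heart, and the point that makes it succeed is precisely the Hankel-determinant identity, which is what forces the powers of the $b_j$ to reassemble into $|\Delta(\lambda)|^{\beta}$ and the $q$-dependence to collapse to $\prod_k q_k^{\beta-1}$. One may carry it out either through the moment-map factorization above or by differentiating the Lanczos recursion directly.
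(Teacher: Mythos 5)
The paper does not reprove this theorem---it is cited from Dumitriu and Edelman \cite{DE}---so there is no in-paper proof to compare against; the only hint given is the remark that one ``computes the Jacobian of the diagonalization,'' which is exactly the route you follow. Your outline of the Dumitriu--Edelman argument is structurally correct and complete: the explicit joint density of the tridiagonal entries (using $\operatorname{Tr}A^2=\sum_i\lambda_i^2$ to repackage the Gaussian weights), the diffeomorphism $(a,b)\leftrightarrow(\lambda,q)$ via cyclicity of $e_1$ and reconstruction by the three-term recursion, the two evaluations of the Hankel determinant $\det(m_{i+j})_{i,j=0}^{n-1}$ giving $\prod_j b_j^{\,n-j}=\prod_{i<j}|\lambda_i-\lambda_j|\prod_k q_k$, the Jacobian, and the Dirichlet integral over the positive part of the sphere.

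The one step you describe rather than execute is the Jacobian itself, and since you yourself identify it as the technical heart, the proposal as written is a plan rather than a proof. For the record, the factor you need is
$\bigl|\partial(a_1,\dots,a_n,b_1,\dots,b_{n-1})/\partial(\lambda_1,\dots,\lambda_n,q_1,\dots,q_{n-1})\bigr|
=\prod_{j=1}^{n-1}b_j\big/\prod_{i=1}^{n}q_i$
(with $q_n$ eliminated via $\sum_i q_i^2=1$), which combined with the Hankel identity produces exactly the density
$\prod_{i<j}|\lambda_i-\lambda_j|^{\beta}\prod_k q_k^{\beta-1}\exp(-\tfrac14\sum_i\lambda_i^2)$ you wrote; proving that formula (by the moment-map factorization you suggest, or by differentiating the Lanczos recursion) is the actual content of \cite{DE}. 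One small correction to your closing remark: the mismatch between $e^{-\frac14\sum\lambda_i^2}$ and the $e^{-\frac{\beta}{4}\sum\lambda_i^2}$ of \eqref{eqn:beta} is a genuine change of variable $\lambda\mapsto\sqrt{\beta}\,\lambda$, not something that can be ``absorbed into $G_{n,\beta}$'' (a normalization constant cannot change the shape of a density); your second observation---that the paper's subsequent normalizations by $1/\sqrt{\beta}$ and by $2\sqrt{n}$ eat this scaling---is the correct resolution, and indeed the paper's statement of the theorem is slightly loose on this point.
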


For GOE, the theorem can be easily proven by successive Householder
transformation and this was observed by Trotter \cite{T}. For general $\beta$,
the idea of the proof is to compute the Jacobian of the diagonization of the
matrix of the form (\ref{eqn:mat}).

Another important result is the one by Ramirez, Rider and Virag \cite{RRV}, whose work is based on previous work by Edelman and Sutton \cite{ES}.  In their paper,
Ramirez, Rider and Virag proved that the edge scaling parameter is $n^{2/3}$ and
computed the asymptotics of the rescaled limiting distribution of the
$\beta$-ensembles:

\begin{thm}[Ram\'irez, Rider, Virag]
 Let $\lambda_1,\hdots, \lambda_k$ be the $k$ largest eigenvalue of the $\beta$-ensemble, in descending order, and
\begin{displaymath}
\mathcal{H}_\beta = -\frac{d^2}{dx^2}+x+\frac{2}{\beta}b'_x
\end{displaymath}
where $b'$ denote a white noise.  Let $\Lambda_1, \hdots, \Lambda_k$ be the $k$ lowest eigenvalues of $\mathcal{H}_\beta$, then 
\begin{displaymath}
 n^{1/6} (\lambda_n - \sqrt{2n\beta})
\end{displaymath}
converges in distribution to $(\Lambda_1,\hdots,\Lambda_k)$
Moreover the asymptotics of $\Lambda_1$ are given by:
\begin{displaymath}
\mathbb{P}(\Lambda_1 > a) = \exp(-\frac{2}{3}\beta a^{3/2}(1+o(1))) 
\end{displaymath}
and
\begin{displaymath}
 \mathbb{P}(\Lambda_1 < -a) = \exp(-\frac{1}{24}\beta a^3 (1+o(1)))
\end{displaymath}
\end{thm}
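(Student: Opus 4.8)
The plan is to prove the two claims of the theorem by separate, essentially orthogonal arguments: (i) the distributional convergence $n^{1/6}(\lambda_n - \sqrt{2n\beta}) \Rightarrow (\Lambda_1,\dots,\Lambda_k)$, which is an operator-convergence statement, and (ii) the left and right tail asymptotics of $\Lambda_1$, which are analytic/probabilistic estimates on the stochastic Airy operator $\mathcal H_\beta$ itself. For part (i), I would start from the Dumitriu--Edelman tridiagonal model $A_{n,\beta}$ in \eqref{eqn:mat}. The eigenvalues near the top correspond, after the shift $A_{n,\beta} \mapsto \sqrt{2n\beta}\,I - A_{n,\beta}$ and the rescaling by $n^{1/3}$ along the diagonal index, to eigenvalues of a discrete Schr\"odinger-type operator $H_n$ on $\ell^2(\{1,\dots,n\})$. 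Writing the bulk index as $i = \lfloor n - n^{1/3} x\rfloor$, a Taylor expansion of the $\chi_{(n-i)\beta}$ entries (using $\chi_k \approx \sqrt k + N(0,1/2)/\sqrt 2$ for large $k$, which requires a quantitative CLT/concentration bound for $\chi$) shows that $H_n$ acts on slowly varying test functions like $-\tfrac{d^2}{dx^2} + x + \tfrac{2}{\sqrt\beta} b'_x$ plus vanishing corrections, where the white-noise term arises because the fluctuations of consecutive $\chi$'s are independent with variance $\Theta(1)$ and the spatial spacing is $n^{-1/3}$, so a sum of $n^{1/3}\,dx$ of them rescales to a Brownian increment of variance $\propto dx$. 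The rigorous mechanism here is the one of Ram\'irez--Rider--Vir\'ag: embed everything via the quadratic-form (Dirichlet-form) picture, prove tightness of the rescaled operators in the appropriate resolvent/strong-graph sense, and identify the limit as $\mathcal H_\beta$; eigenvalue convergence then follows from the continuity of the bottom of the spectrum under this convergence.

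For part (ii), the right tail, I would use the variational characterization $\Lambda_1 = \inf_{\|f\|_2 = 1} \langle f, \mathcal H_\beta f\rangle = \inf \big( \int (f')^2 + \int x f^2 + \tfrac{2}{\sqrt\beta}\int f^2\,db \big)$. For the upper bound on $\mathbb P(\Lambda_1 > a)$, plug in the deterministic Airy-type trial function (a shifted, scaled Airy function or a simple Gaussian bump centered at a suitable point) to produce an event of the form ``a certain Gaussian functional of the white noise is large,'' whose probability is $\exp(-\tfrac23\beta a^{3/2}(1+o(1)))$ by a direct Gaussian tail computation after optimizing the location and width of the bump; the $a^{3/2}$ exponent is exactly the classical Airy cost $\int_0^a \sqrt{a - x}\,dx$. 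For the lower bound one restricts to an a priori high-probability region where the noise is small on the relevant interval $[0, O(a)]$ and uses Sturm--Liouville/Riccati comparison to show that on that region $\mathcal H_\beta$ has no eigenvalue below $a(1-o(1))$; the cost of forcing the noise to be that small is again $\exp(-\tfrac23\beta a^{3/2}(1+o(1)))$, matching. The left tail is handled symmetrically but with a different scaling: if $\Lambda_1 < -a$ then some unit $f$ has $\langle f,\mathcal H_\beta f\rangle < -a$, which — since $\int x f^2 \ge 0$ on the support that matters and the kinetic term is nonnegative — forces $\tfrac{2}{\sqrt\beta}\int f^2\,db < -a$, i.e.\ a large negative value of a Gaussian functional; optimizing over the width $w$ of $f$ one finds the functional has standard deviation $\asymp w^{-1/2}/\sqrt\beta$ against a potential cost $\asymp w$ (the kinetic term), giving an event of probability $\exp(-c\beta a^3)$, with the constant $c = 1/24$ pinned down by carefully carrying out the Gaussian optimization (this is where the precise numeric constant comes from).

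The main obstacle is part (i): making the passage from the discrete tridiagonal operator to $\mathcal H_\beta$ rigorous requires (a) a uniform, quantitative approximation of the $\chi$-variables by Gaussians with explicit error control (Koml\'os--Major--Tusn\'ady-type or Bernstein-type bounds), (b) a functional-analytic framework — the natural one is the form-convergence / norm-resolvent-convergence setup of Ram\'irez--Rider--Vir\'ag — in which the discrete white-noise-like potential converges to genuine white noise, and (c) control of the lower-order drift and diffusion corrections so they do not contribute in the limit. In the interest of not reproving \cite{RRV}, the honest route in this paper is to cite their operator-convergence theorem for the tridiagonal model directly for part (i) and to present the tail estimates of part (ii) via the Riccati/variational arguments sketched above; the tail bounds are where the ``$(1+o(1))$'' precision and the explicit constants $\tfrac23$ and $\tfrac1{24}$ genuinely need to be nailed down, so that is where I would concentrate the detailed work.
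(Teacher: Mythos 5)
This theorem is quoted from Ram\'irez--Rider--Vir\'ag as background; the paper offers no proof of its own, only a one-sentence description of the mechanism (the tridiagonal model viewed as a discretization of $\mathcal{H}_\beta$), and your sketch expands on exactly that mechanism --- operator/form convergence of the rescaled Dumitriu--Edelman matrix to the stochastic Airy operator, plus variational and Riccati arguments for the two tail exponents --- which is indeed the strategy of \cite{RRV}, so citing that paper is the right call. One small point in your favour: the noise coefficient in the operator should be $2/\sqrt{\beta}$ as you write it, not the $2/\beta$ appearing in the paper's statement, which is a typo.
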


The proof of the above theorem is based on the observation that the tridiagonal
matrix model can be viewed as a discretization of certain stochastic
differential operator $\mathcal{H}_\beta$ and the probability distribution
of the largest eigenvalue of the tridiagonal matrix will converge to the
probability distribution of the eigenvalues associated stochastic differential operator as $n\rightarrow\infty$.  This point process is given by the so-called $Airy_\beta$ process and upon rescaling will converge to $\mu$ in our main theorem.  Notice that if we consider the operator $\mathcal{H}_\infty$, which is the free Laplacian, then the spectral measure of the operator under proper rescaling is $\mu$.

Of course, these two results are only the tip of the iceberg of the vast field
of random matrices, there are excellent monographs , e.g. \cite{AGZ}, \cite{F}
that discuss the latest developments in details.

We see from the above theorem that, at the scaling of $n^{2/3}$, the limiting
distributions are different for each $\beta$ at the edge of the spectrum.  On
the other hand, the Wigner semicircle law is universal for all $\beta$, one can
therefore ask the question at which scale does the edge look different for
different $\beta$, which leads us to consider the problem at hand.  Moreover,
one can also ask, given the availability of a matrix representation, whether
the classical moment method will be able to give us information at the edge of
the spectrum.  It is also part of the goal of this paper to explore this
possibility.  Lastly, a local semicircle law is always of interest. The power ofa local semicircle law can be seen in recent papers of
Erd\"os, Schlein and Yau \cite{ESY}, \cite{ESY2} and in Tao and Vu
\cite{TV} in their proof of universality in Wigner ensembles and also in the recent paper of Bourgarde, Erd\"os and Yau \cite{BEY}.  Bao and Su \cite{BS} has also established through the Brownian Carousel a local semicircle law in the bulk for the Gaussian $\beta$-ensemble.  Also, one does not
expect to prove a local result at the bulk using the moments as higher moments
highlight the eigenvalues at the edge.

\section{Combinatorial considerations} \label{sec:proof}
Our goal is to understand the trace of higher degrees of the matrix
$A_{n,\beta}$.  For simplicity of notations, we consider the matrix
scaled by $1/\sqrt{\beta}$, so that the off diagonal entries have variance $n,
n-1, \hdots, 1$ and the diagonal entries have variance $2/\beta$.  The only
difference is that we have to normalize the resulting
calculations of moments by $2^{p_n}n^{p_n/2}$.  The traces are given by

\begin{equation} \label{eqn:sum}
 \mathbb{E}(\textrm{Tr} A_{n,\beta}^k) = \mathbb{E} \sum_{\mathcal{P}}
\xi_{i_0i_1}\xi_{i_1i2}\hdots\xi_{i_{k-1}i_0}
\end{equation}

where $\mathcal{P}$ is the set of indices $\{i_0,i_1,\hdots,i_{k-1}\}$

It is customary to encode this information as a graph and to study
the combinatorics associated with this graph: 

\begin{figure}[h!]
\centering
\begin{tikzpicture}
 \filldraw[black] (0,0) circle (2pt) node[anchor = north] {1}
		  (1,0) circle (2pt) node[anchor = north] {2}
		  (2,0) circle (2pt) node[anchor = north] {3}
		  (3,0) circle (0.5pt)
		  (3.5,0) circle (0.5pt)
		  (4,0) circle (0.5pt)
		  (5,0) circle (2pt) node[anchor = north] {n-2}
		  (6,0) circle (2pt) node[anchor = north] {n-1}
		  (7,0) circle (2pt) node[anchor = north] {n};
 \draw (0,0)--(1,0)--(2,0)--(2.5,0);
 \draw (4.5,0)--(5,0)--(6,0)--(7,0);
 \draw (0,0) .. controls (-1,1) and (1,1) .. (0,0);	
 \draw (1,0) .. controls (0,1) and (2,1) .. (1,0);
 \draw (2,0) .. controls (1,1) and (3,1) .. (2,0);
 \draw (5,0) .. controls (4,1) and (6,1) .. (5,0);
 \draw (6,0) .. controls (5,1) and (7,1) .. (6,0);
 \draw (7,0) .. controls (6,1) and (8,1) .. (7,0);
\end{tikzpicture}
\caption{Graph representation of a tridiagonal matrix}
\end{figure}
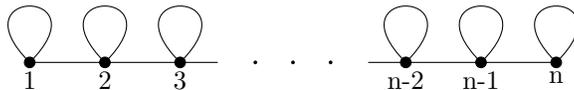

We first note an easy simplification of our problem at hand, due to Dumitriu
(see \cite{D} for the proof):
\begin{lem} \label{lem:simp}
 $\mathbb{E}(\emph{\textrm{Tr}}A_{n}^k) = n \mathbb{E}(A_n^k)_{11}$
\end{lem}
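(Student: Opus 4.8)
The plan is to pass from the tridiagonal matrix to its spectral measure at the first coordinate vector $e_1$, and then to use the full form of the Dumitriu--Edelman construction, in which the eigenvalues and that spectral measure decouple. First I would record the following. Since the density (\ref{eqn:beta}) vanishes on the diagonals $\lambda_i=\lambda_j$, the matrix $A_n$ has simple spectrum almost surely, so it admits an orthonormal eigenbasis $v_1,\dots,v_n$ with eigenvalues $\lambda_1,\dots,\lambda_n$; writing $q_j=\langle e_1,v_j\rangle^2$ we have $\sum_j q_j=1$ and, almost surely,
\begin{displaymath}
 (A_n^k)_{11}=\langle e_1,A_n^k e_1\rangle=\sum_{j=1}^n q_j\,\lambda_j^k,\qquad \mathrm{Tr}\,A_n^k=\sum_{j=1}^n \lambda_j^k .
\end{displaymath}
Both sides are polynomials in the entries of $A_n$ (Gaussians and $\chi$'s, all of which have finite moments), so every expectation below is finite, and also $\mathbb{E}\,(A_n^k)_{11}=\mathbb{E}\sum_j q_j\lambda_j^k$ makes sense since $0\le q_j\le 1$.

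Next I would invoke the refined Dumitriu--Edelman change of variables: under the map $A_{n,\beta}\mapsto(\lambda_1,\dots,\lambda_n;\,q_1,\dots,q_n)$ the law of $A_{n,\beta}$ factors as the $\beta$-ensemble law (\ref{eqn:beta}) for the $\lambda_j$'s times a Dirichlet$(\beta/2,\dots,\beta/2)$ law for $(q_1,\dots,q_n)$, and the two factors are \emph{independent}. This is precisely the Jacobian computation by which \cite{DE} produce the tridiagonal model in the first place; for $\beta=1,2,4$ it also follows from the rotation invariance of the eigenvector matrix of the Gaussian ensembles together with the fact that the Householder reflections used to tridiagonalize act trivially on the first coordinate, so the spectral measure at $e_1$ is unchanged. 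Taking expectations in the displayed identities and using independence,
\begin{displaymath}
 \mathbb{E}\,(A_n^k)_{11}=\sum_{j=1}^n \mathbb{E}[q_j]\,\mathbb{E}[\lambda_j^k].
\end{displaymath}
By the permutation symmetry of a Dirichlet distribution with equal parameters the $\mathbb{E}[q_j]$ are all equal, and they sum to $\mathbb{E}\sum_j q_j=1$, so $\mathbb{E}[q_j]=1/n$ for every $j$. Hence the right-hand side is $\frac1n\sum_j\mathbb{E}[\lambda_j^k]=\frac1n\,\mathbb{E}\,\mathrm{Tr}\,A_n^k$, which is exactly the claim. (As a check, for $k=2$ both sides of the lemma equal $2n+\beta n(n-1)$ by a one-line computation with the variances $2$ and $(n-l)\beta$.)

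The only non-formal ingredient is the independence/Dirichlet statement of the previous paragraph; this is the real content of \cite{DE} rather than of the bare eigenvalue theorem quoted in the excerpt, and I expect citing it to be the path of least resistance, so I would keep the spectral argument as the main line. If one instead wants a self-contained proof, the alternative is to expand both $\mathbb{E}\,\mathrm{Tr}\,A_n^k=\sum_i\mathbb{E}(A_n^k)_{ii}$ and $n\,\mathbb{E}(A_n^k)_{11}$ as sums over closed walks on the path graph $P_n$, each walk $w$ weighted by $\prod_l\mathbb{E}[a_l^{p_l(w)}]\prod_l\mathbb{E}[b_l^{q_l(w)}]$ with $\mathbb{E}[b_l^{2s}]=\prod_{t=0}^{s-1}\bigl((n-l)\beta+2t\bigr)$, and to match the two expansions; the point there is that the dependence of the edge weights on the index $l$ is ``arithmetic,'' which is what makes the two sides agree, but setting up that bijection is visibly messier than the spectral computation, so I would only mention it as the combinatorial route behind Dumitriu's statement in \cite{D}.
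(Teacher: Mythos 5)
Your proof is correct. The paper itself gives no argument for this lemma (it simply cites Dumitriu's thesis), and your route --- writing $(A_n^k)_{11}=\sum_j q_j\lambda_j^k$ with $q_j=\langle e_1,v_j\rangle^2$, invoking the independence of the eigenvalues from the Dirichlet$(\beta/2,\dots,\beta/2)$ weights in the full Dumitriu--Edelman change of variables, and using $\mathbb{E}[q_j]=1/n$ by exchangeability --- is precisely the standard proof from that reference, so you have in effect reproduced the intended argument.
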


The plan now is to first calculate the contributions from paths that do not
go through self loops, i.e. paths where $i_j \neq i_{j+1}$ for all $j$; after
that we shall prove that the other paths contribute asymptotically lower order
terms to the sum.

Let us discuss of the odd moments first.  Even though the offdiagonal entries are not symmetric distributions and have nonzero odd moments.  One notices that the coefficients of the characteristics equation for the matrix involves only even powers of the offdiagonal entries, so we can replace the $\chi$-distribution by a symmetric distribution that has the same even moments as the $\chi$-distribution and the joint distribution of eigenvalues of the resulting matrix will be the same as that of the original.  Since the distribution of the trace depends only on that of the eigenvalues, we see that the odd moments vanish.  If $f(x)$ is the density of the $\chi$-distribution, then the distribution $g(x)$ equals to $f(x)/2$ for $x>0$ and $f(-x)/2$ for $x<0$ will be the desirable replacement.

We now return to the calculations of the even moments.  Associate with each path
on the graph a Dyck path of length $2k$, where
travelling to the right corresponds to an up edge and travelling to the left
corresponds to a down edge.  Throughout the rest of the paper, we shall
most of the time think of Dyck paths as probabilistics objects, i.e.
samples from a random walk $X$ of length $2k$ conditioned on $X_0 = X_{2k} = 0$
and $X_m \geq 0$ for $0\leq m \leq 2k$. 

Before we start the proof in earnest, let us first take a step back and recall some relevent known facts and see what could cause problems:

1) The number of Dyck paths of length $2k$ is given by the $k$-th Catalan number
$C_k$.
2) The $2l$-th moment of a chi-distributed random variable of degree with freedom
$k$ is given by $\prod_{i=0}^l (k + 2i)$.

Each path in the sum (\ref{eqn:sum}) will contribute $\prod_{i=0}^k (n + j_i)$ for
some $j_i$.  The hope is that asymptotically, for $k = o(n^{2/3})$, the product
will be $n^k (1 + o(1))$ and the worry is that some of the $j_i$ gets too small
(negative) or too big.  This corresponds to the case where the path goes too
far to the right or stay on the same edge on the graph for too many steps.  Therfore our goal is to show that there are very few of these problematic paths
that they do not affect the total sum as $n$ goes to infinity.

The first step is to connect Dyck paths with a true random walk, so that we can
use many of the known results and nice properties of the latter.  We start
with the following well-known lemma, which establishes the relationship between Dyck paths and Bernoulli bridges.

\begin{lem} \label{lem:BEBB}
 There is a one-to-one correspondance between $\{1,\hdots,k+1\} \times
\textrm{\{Dyck paths of length 2k\}} \leftrightarrow \textrm{\{Bernoulli
Bridges of length 2k\}}$
\end{lem}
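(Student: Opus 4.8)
The plan is to exhibit the bijection explicitly by deciding, at each up-step of a Dyck path, whether the corresponding Bernoulli bridge step is $+1$ or $-1$, and likewise for the down-steps, in such a way that the resulting $\pm1$ walk is a bridge (returns to $0$ after $2k$ steps) and conversely every bridge arises exactly once from a pair (label, Dyck path). Recall a Dyck path of length $2k$ is a $\pm1$ walk from $0$ to $0$ staying nonnegative; a Bernoulli bridge of length $2k$ is just a $\pm1$ walk from $0$ to $0$ with no positivity constraint. Since there are $\binom{2k}{k}$ Bernoulli bridges and $C_k = \binom{2k}{k}/(k+1)$ Dyck paths, the count $(k+1)\cdot C_k = \binom{2k}{k}$ already matches, so the content is producing the bijection, not checking cardinalities.

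The construction I would use is the classical cycle lemma / Dvoretzky–Motzkin argument applied to the bridge-to-Dyck direction, inverted. Given a Bernoulli bridge $(\varepsilon_1,\dots,\varepsilon_{2k})$, form its partial sums $S_0=0, S_1, \dots, S_{2k}=0$, let $m = \min_{0\le j\le 2k} S_j \le 0$, and let $j^\star$ be the \emph{first} index at which this minimum is attained. Then the cyclic shift of the step sequence starting just after position $j^\star$, namely $(\varepsilon_{j^\star+1},\dots,\varepsilon_{2k},\varepsilon_1,\dots,\varepsilon_{j^\star})$, is a Dyck path: its partial sums are $S_{j^\star+i}-S_{j^\star}\ge 0$ by minimality of $S_{j^\star}$, and it returns to $0$ since the original did. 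Recording the pair (Dyck path, $\,j^\star$) — where $j^\star$ ranges over $\{0,1,\dots,2k\}$ but in fact for a fixed Dyck path exactly $k+1$ of the $2k+1$ cyclic rotations are themselves Dyck paths and only those can occur, giving the label set $\{1,\dots,k+1\}$ after reindexing — defines the forward map. For the inverse, given a Dyck path $D$ and a label $\ell\in\{1,\dots,k+1\}$, one rotates $D$ to the $\ell$-th of its "admissible" cyclic shifts; I would phrase "admissible" via the heights at which $D$ returns toward its prefix structure so that the rotation lands on a genuine bridge whose first-minimum index recovers $\ell$.

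The key steps, in order: (i) state precisely the two objects and fix conventions (walk conventions, whether steps are indexed $1,\dots,2k$, nonnegativity vs.\ positivity); (ii) define the forward map bridge $\to$ (Dyck, label) via first-attained minimum and cyclic shift, and verify the output is a Dyck path; (iii) define the reverse map (Dyck, label) $\to$ bridge and verify the output is a bona fide $\pm1$ bridge; (iv) check the two maps are mutually inverse — this is where I expect the real work, since one must show that the cyclic shift used in the reverse direction produces a walk whose \emph{first} global minimum sits exactly at the cut point, so that applying the forward map returns the same label, and conversely; (v) a short cardinality cross-check ($(k+1)C_k=\binom{2k}{k}$) as a sanity confirmation.

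The main obstacle is the careful bookkeeping in step (iv): the cycle lemma gives that each Dyck path has exactly $k+1$ cyclic rotations that are nonnegative bridges, but one must set up the labelling so that it is a clean bijection with $\{1,\dots,k+1\}$ and so that the "first minimum" tie-breaking in the forward direction is consistent with the rotation chosen in the reverse direction — off-by-one errors and the distinction between "first time the minimum is attained" versus "last time" are exactly what make or break the argument. Everything else (that a cyclic shift past the minimum is nonnegative, that it is still a bridge) is immediate from the partial-sum description. I would keep the proof short by citing the cycle lemma for the enumerative half and only writing out the explicit shift maps and their mutual inversion.
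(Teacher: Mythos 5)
Your high-level idea — cyclic shift, cycle lemma — is the same one the paper uses, but the specific implementation has a genuine error that makes the map fail to be a product bijection of the required form.

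The paper does \emph{not} work directly with length-$2k$ bridges from $0$ to $0$: it first appends a down step to the Dyck path, producing a walk of length $2k+1$ with $k$ up steps and $k+1$ down steps, and then labels by \emph{which of the $k+1$ down steps} the cut is made before. The resulting "Bernoulli bridge" is a length-$(2k+1)$ walk from $0$ to $-1$ whose first step is down (equivalently, upon deleting the forced initial down step, a bridge of length $2k$), and the cycle lemma for sequences summing to $-1$ guarantees exactly one rotation is Dyck+down, which inverts the map. Because one cuts only at down steps, every Dyck path sits over exactly $k+1$ bridges — a genuine $(k+1)$-to-$1$ structure.

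Your version cuts the unaugmented length-$2k$ bridge at its first global minimum and labels by the raw shift $j^\star$. That does produce a Dyck path, and it does give \emph{a} bijection onto the set of length-$2k$ bridges, but the fibers over different Dyck paths are not of constant size $k+1$. Concretely for $k=2$: the fiber over $UUDD$ is $\{UUDD, UDDU, DUUD, DDUU\}$ (size $4$), while the fiber over $UDUD$ is $\{UDUD, DUDU\}$ (size $2$); these are $4$ and $2$, not $3$ and $3$. The sentence "for a fixed Dyck path exactly $k+1$ of the $2k+1$ cyclic rotations are themselves Dyck paths" is where the argument breaks: that constancy is simply false for length-$2k$ $\pm 1$ bridges (the number of rotations that stay nonnegative equals the number of times the walk attains its global minimum, which varies with the path). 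The cycle lemma in its standard form applies to step sequences summing to $-1$, not to $0$, which is exactly why the paper appends a down step before rotating. To repair your proof you must either (a) append a down step and label by the index of the down step at the cut, as the paper does, or (b) choose an entirely different, size-$(k+1)$ labelling scheme — the first-minimum index is not it.
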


\begin{proof}
 The proof is to provide an explicit map.  Attach a down path to the end of a
Dyck path, resulting in a total of $k+1$ down paths and $k$ up paths.  Pick one
of the $k+1$ down paths and make a cut before the down path.  The Bernoulli
bridge (from $(0,0)$ to $(2k,-1)$ with first path always downwards) is
contructed by moving the second half to the origin and attaching the first half
to it; 

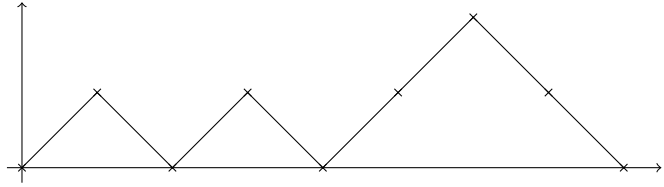
\begin{figure}[h]
\centering
\begin{tikzpicture} 
 \draw[->] (-0.2,0) -- (8.5,0); 
 \draw[->] (0,-0.2) -- (0,2.2);
 \draw plot[mark=x] coordinates {(0,0) (1,1) (2,0) (3,1) (4,0) (5,1) (6,2) (7,1)
(8,0)};
\end{tikzpicture}
\caption{A sample Dyck path}
\end{figure}
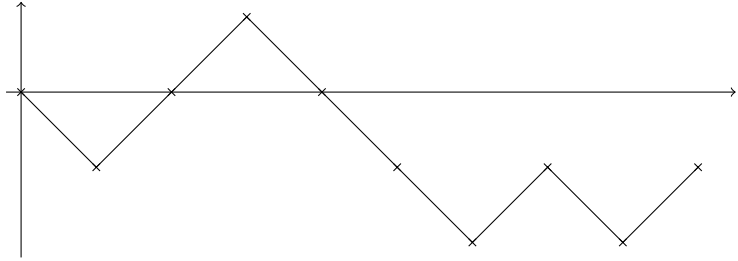
\begin{figure}[h]
 \centering
 \begin{tikzpicture}
  \draw[->](-0.2,0) -- (9.5,0);
  \draw[->](0,-2.2) -- (0,1.2);
  \draw plot[mark=x] coordinates {(0,0) (1,-1) (2,0) (3,1) (4,0) (5,-1) (6,-2)
(7,-1) (8,-2) (9,-1)};
 \end{tikzpicture}
 \caption{The Bernoulli bridge corresponding to the above Dyck path cutting at
the second down edge}
\end{figure} 
\end{proof}

The next lemma, relating the probability of an event of a Bernoulli bridge with
one of a random walk, is first proven by Khorunzhiy and Marckert \cite{KM}:

\begin{lem} [Khorunzhiy, Marckert] \label{lem:BBRW}
 Let $S_k = X_1 + \hdots + X_k$ be a simple random walk.  Let $\mathcal{F}_l$
be the measurable events generated by $X_1,\hdots, X_l$ and let
$\mathbb{P}_l^w$ be the probability measures induced on the paths by
$X_1,\hdots, X_l$.  Let $\mathbb{P}_l^b$ be the measure $\mathbb{P}_l^w$
conditioned on $X_0 = 0$ and $S_l = -1$.  Then for any $\mathcal{F}_k$-
measureable event
$A_k$,
\begin{equation}
 \mathbb{P}_{2k+1}^b(A_k) \leq C \mathbb{P}_{2k+1}^w(A_k) = C
\mathbb{P}_k^w(A_k)
\end{equation}
 for some constant $C$ independent of $n$.
\end{lem}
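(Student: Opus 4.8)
The plan is to compare the Bernoulli bridge measure $\mathbb{P}_{2k+1}^b$ with the unconditioned walk measure $\mathbb{P}_{2k+1}^w$ by writing the conditional probability explicitly via Bayes' rule, and then to control the ratio of the two normalizing probabilities using a local central limit theorem. Concretely, for an $\mathcal{F}_k$-measurable event $A_k$ (an event depending only on the first $k$ steps), I would write
\begin{displaymath}
 \mathbb{P}_{2k+1}^b(A_k) = \frac{\mathbb{P}_{2k+1}^w(A_k \cap \{S_{2k+1} = -1\})}{\mathbb{P}_{2k+1}^w(S_{2k+1} = -1)}.
\end{displaymath}
Now condition on $\mathcal{F}_k$ inside the numerator: on the event $A_k$, the value $S_k$ is determined by the first $k$ steps, and the remaining $k+1$ steps are independent of $\mathcal{F}_k$. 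Hence the numerator equals $\mathbb{E}^w[\mathbf{1}_{A_k}\, g_{k+1}(-1 - S_k)]$, where $g_{k+1}(x) = \mathbb{P}^w(S_{k+1}' = x)$ is the $(k{+}1)$-step transition probability of the simple random walk (with $S'$ an independent copy). Since the walk has $\pm 1$ steps, parity considerations show the relevant endpoints $-1 - S_k$ are always of the correct parity for $g_{k+1}$ to be nonzero, and the binomial $g_{k+1}(x)$ is maximized near $x = 0$.

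The key estimate is then: $g_{k+1}(-1 - S_k) \leq C'/\sqrt{k}$ uniformly in the value of $S_k$ (the maximum of a centered binomial over $k+1$ Bernoulli steps is $\binom{k+1}{\lceil (k+1)/2 \rceil} 2^{-(k+1)} \sim \sqrt{2/(\pi k)}$ by Stirling), while the denominator $\mathbb{P}_{2k+1}^w(S_{2k+1} = -1) = \binom{2k+1}{k} 2^{-(2k+1)} \sim 1/\sqrt{\pi k}$ is bounded below by $c/\sqrt{k}$, again by Stirling. Combining,
\begin{displaymath}
 \mathbb{P}_{2k+1}^b(A_k) = \frac{\mathbb{E}^w[\mathbf{1}_{A_k}\, g_{k+1}(-1 - S_k)]}{\mathbb{P}_{2k+1}^w(S_{2k+1} = -1)} \leq \frac{(C'/\sqrt{k})\,\mathbb{P}^w_k(A_k)}{c/\sqrt{k}} = C\,\mathbb{P}_k^w(A_k),
\end{displaymath}
with $C = C'/c$ independent of $k$ (and of $n$). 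The final equality $\mathbb{P}_{2k+1}^w(A_k) = \mathbb{P}_k^w(A_k)$ is immediate since $A_k$ depends only on the first $k$ coordinates, so the induced measures agree.

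The main obstacle is making the two-sided Stirling bounds genuinely \emph{uniform} in $k$ and valid for \emph{all} $k$ (not just large $k$): one needs an explicit lower bound on $\binom{2k+1}{k}2^{-(2k+1)}$ and an explicit upper bound on $\max_x g_{k+1}(x)$ that hold for every $k \geq 1$, so that the constant $C$ can be extracted honestly. This is routine but must be done with care — e.g. using the standard bounds $\frac{1}{2\sqrt{m}} \leq \binom{2m}{m}2^{-2m} \leq \frac{1}{\sqrt{2m}}$ — since a constant that degrades with $k$ would be useless for the subsequent moment computation where $k$ grows with $n$. A secondary technical point is the parity bookkeeping: one must check that $A_k$ only constrains walks whose endpoint $S_k$ has a parity compatible with reaching $-1$ in $k+1$ further steps, which is automatic here since $k + (k+1) = 2k+1$ is odd and $-1$ is odd.
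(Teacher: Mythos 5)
Your proof is correct and is essentially the same argument as the paper's: both bound the ratio of the bridge law to the free walk law on $\mathcal{F}_k$, which after conditioning on $S_k$ reduces to the quantity $g_{k+1}(-1-S_k)/\mathbb{P}(S_{2k+1}=-1)$ that the paper writes as $2^k\binom{k+1}{m+1}/\binom{2k+1}{k}$. If anything, your version is slightly more complete, since the paper merely asserts that $C_0 := \sup_k\sup_m$ of this ratio is finite, whereas you supply the two-sided Stirling bounds (max of the binomial $\lesssim 1/\sqrt{k}$ versus $\mathbb{P}(S_{2k+1}=-1)\gtrsim 1/\sqrt{k}$) that justify it uniformly in $k$.
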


\begin{proof}
 The second equality is obvious as $A_k$ is $\mathcal{F}_k$-measurable. 
Suppose $A$ is $\mathcal{F}_k$-measurable, then 
\begin{displaymath}
 \mathbb{P}_{2k+1}^w (A | S_k = m, S_{2k+1} = -1) =
\mathbb{P}_{2k+1}^w(A|S_k=m) = \mathbb{P}_k^w(A|S_k=m),
\end{displaymath}
thus,
\begin{align} \nonumber
 \mathbb{P}_{2k+1}^b(A) &= \mathbb{P}_{2k+1}^w(A|S_{2k+1}=-1) \\ \nonumber
&= \sum_m \mathbb{P}_k^w(A|S_k=m) \mathbb{P}_{2k+1}^w(S_k=m|S_{2k+1}=-1).
\nonumber
\end{align}
On the other hand,
\begin{align} \nonumber
 \mathbb{P}_{2k+1}^w(S_k=m|S_{2k+1}=-1) &= \frac{\binom{k}{m}
\binom{k+1}{m+1}}{\binom{2k+1}{m+1}} \\ \nonumber
&= \frac{\binom{k}{m}}{2^k} \frac{2^k\binom{k+1}{m+1}}{\binom{2k+1}{m=1}} \\
\nonumber
&\leq \mathbb{P}_k^w(S_k=m) C_0, \nonumber
\end{align}
where $C_0 := \sup_k \sup_m \frac{2^k\binom{k+1}{m+1}}{\binom{2k+1}{m=1}}$
is finite.  It follows that 
\begin{align} \nonumber
 \mathbb{P}_{2k+1}^b(A) &\leq C_0 \sum_m
\mathbb{P}_k^w(A|S_k=m)\mathbb{P}_k(S_k=m) \\ \nonumber
&= C_0 \mathbb{P}_k^w(A) \nonumber
\end{align} 
\end{proof}

As discussed earlier, we would like to show that the probability of crossing
too far to the right of the graph is small and this is indeed the case:

\begin{prop} \label{prop:hor}
 Let $S_{2k}$ be a random walk conditioned on $S_0 = S_{2k} = 0$ and $S_l \geq
0$ for all $0 \leq l \leq 2k$.  Then for all $k$ 
\begin{equation}
 \mathbb{P}(\max_{0\leq l \leq 2k} S_k  \geq \lambda k^{1/2}) \leq C
\exp(-c\lambda^2)
\end{equation}
for some constants $c$ and $C$.
\end{prop}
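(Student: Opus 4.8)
The plan is to transfer the question to the Bernoulli bridge associated with the Dyck path by Lemma \ref{lem:BEBB}, but in a way that replaces the \emph{height} of the Dyck path by the \emph{range} (maximum minus minimum) of the bridge. This is the point of the argument: it lets us pass to an \emph{unconstrained} random walk without paying a factor of order $k$ for the nonnegativity constraint. Once the object is unconstrained, Lemma \ref{lem:BBRW} together with the reflection principle and Hoeffding's inequality finish the job.

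The first thing I would record is a deterministic fact. Let $D$ be a Dyck path of length $2k$ and let $B$ be the Bernoulli bridge of length $2k+1$ obtained from $D$ (with an extra down-step appended) and a choice of down-step as in the proof of Lemma \ref{lem:BEBB}. Since $B$ is built from a cyclic rearrangement of the increments of the appended path, a case inspection of the construction (according to which cut is taken and where the extremes of the appended path sit relative to the cut) gives, for \emph{every} choice of cut,
\[
\max_{0\le l\le 2k} D_l \;\le\; \Bigl(\max_{0\le l\le 2k+1} B_l \;-\; \min_{0\le l\le 2k+1} B_l\Bigr) + 1 .
\]
Because the correspondence of Lemma \ref{lem:BEBB} is measure preserving --- the uniform law on $\{1,\dots,k+1\}\times\{\text{Dyck paths of length }2k\}$ is pushed forward to the uniform law on the associated Bernoulli bridges --- the conditioned walk $S$ in the statement has the law of a uniform Dyck path, and hence
\[
\mathbb{P}\Bigl(\max_{0\le l\le 2k} S_l \ge \lambda k^{1/2}\Bigr) \;\le\; \mathbb{P}^{b}\Bigl(\operatorname{range}(B) \ge \lambda k^{1/2} - 1\Bigr),
\]
where $\mathbb{P}^b$ is the Bernoulli-bridge measure of Lemma \ref{lem:BBRW} (restricting to bridges whose first step is down costs only a constant, since such bridges have probability bounded below under $\mathbb{P}^b$).

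It then remains to bound $\mathbb{P}^b(\operatorname{range}(B)\ge m)$ for $m = \lambda k^{1/2}-1$. Using $B_0=0$ one has $\operatorname{range}(B)\le 2\max_{0\le l\le 2k+1}|B_l|$, so this probability is at most $\mathbb{P}^b(\max_l B_l\ge m/2)+\mathbb{P}^b(-\min_l B_l\ge m/2)$. Splitting the index range at the midpoint, the maximum (resp.\ minimum) over $0\le l\le k$ is $\mathcal{F}_k$-measurable, so by Lemma \ref{lem:BBRW} each such event is at most $C$ times the probability of the corresponding event for a simple random walk $S$ of length $k$; the events over the second half of the bridge are treated identically after reversing time, since the time-reversal of $B$ is again, up to a sign and a shift by one, a Bernoulli bridge of length $2k+1$ ending at $-1$. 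For the simple random walk the reflection principle gives $\mathbb{P}(\max_{0\le l\le k}|S_l|\ge t)\le 4\,\mathbb{P}(S_k\ge t)$, and Hoeffding's inequality gives $\mathbb{P}(S_k\ge t)\le \exp(-t^2/2k)$. Taking $t$ of order $m$ and assembling,
\[
\mathbb{P}\Bigl(\max_{0\le l\le 2k} S_l \ge \lambda k^{1/2}\Bigr) \;\le\; C\exp\!\bigl(-c\,(\lambda k^{1/2}-1)^2/k\bigr) .
\]
For $\lambda$ bounded away from $0$ the right-hand side is at most $C'\exp(-c'\lambda^2)$; for small $\lambda$, and for $\lambda> k^{1/2}$ (where the event is empty, since a Dyck path of length $2k$ has height at most $k$), the asserted bound holds trivially after enlarging $C$.

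The one step requiring genuine care is the deterministic inequality of the second paragraph: the fact that the cyclic shift underlying Lemma \ref{lem:BEBB} converts the \emph{height} of the Dyck path into (at most one more than) the \emph{range} of the bridge. This is exactly what rescues the argument from the naive bound $\mathbb{P}(\max_l D_l\ge a)\le \text{const}\cdot\mathbb{P}^b(\max_l B_l\ge a)$, which is false by a factor $\asymp k$ because conditioning a bridge to stay nonnegative costs a factor $k+1$; bounding the range instead loses nothing. Everything after that is routine random-walk estimation.
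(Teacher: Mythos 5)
Your proposal is correct and follows essentially the same route as the paper: pass from the Dyck-path maximum to the range of the associated Bernoulli bridge via Lemma~\ref{lem:BEBB}, split the bridge at its midpoint so Lemma~\ref{lem:BBRW} applies to each half, then use the reflection principle and Hoeffding's inequality for the simple walk. Your write-up is a bit more careful than the paper's (explicit handling of the first-step-down convention, the measure-preservation of the bijection, and the small-$\lambda$ and $\lambda>k^{1/2}$ edge cases), but the key idea --- bound the \emph{range} of the bridge rather than its maximum so that the nonnegativity conditioning costs nothing --- is the same.
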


\begin{proof}
Using the notations from the previous lemma, denote the Bernoulli
excursion of length $2k$, the Bernoulli bridge from $(0,0)$ to $(2k,0)$, and
the random walk of length $2k$ by $S_{2k}^e$, $S_{2k}^b$, $S_{2k}^w$
respectively.

By lemma \ref{lem:BEBB}, we have
\begin{displaymath}
 \max_{0\leq l\leq 2k}S_{l}^b - \min_{0\leq l\leq 2k}S_{l}^b - 1 \leq
\max_{0\leq l\leq 2k}s_l^e \leq \max_{0\leq l\leq 2k}S_{l}^b - \min_{0\leq l\leq
2k}S_{l}^b + 1
\end{displaymath}
So it suffices to show that
\begin{displaymath}
 \mathbb{P}(\max_{0\leq l\leq 2k}S_k^b - \min_{0\leq l\leq 2k}S_k^b \geq
\lambda k^{1/2}) \leq C \exp(-c\lambda^2)
\end{displaymath}
Moreover, if 
\begin{displaymath}
 \max_{0\leq l\leq 2k}S_k^b - \min_{0\leq l\leq 2k}S_k^b \geq
\lambda k^{1/2},
\end{displaymath}
then either
\begin{equation} \label{eqn:excep1}
 \max_{0\leq l\leq k}S_k^b - \min_{0\leq l\leq k}S_k^b \geq
\frac{1}{2}\lambda k^{1/2}
\end{equation}
or
\begin{displaymath}
 \max_{k+1\leq l\leq 2k}S_k^b - \min_{k+1\leq l\leq 2k}S_k^b \geq
\frac{1}{2}\lambda k^{1/2}.
\end{displaymath}
By reversing the Bernoulli bridge, it suffices to show the first inequality. 
The event satisfying the inequality (\ref{eqn:excep1}) is contained in the union
of the events
\begin{displaymath}
 \max_{0\leq l \leq 2k}S_k^b \geq \frac{\lambda}{4}k^{1/2} \textrm{ and }
\min_{0\leq l \leq 2k}S_k^b \leq \frac{-\lambda}{4}k^{1/2}
\end{displaymath}
By lemma \ref{lem:BBRW},
\begin{align} \nonumber
 \mathbb{P}(\max_{0\leq l \leq k} S_k^b \geq \frac{-\lambda}{4}k^{1/2}) &\leq
C_0 \mathbb{P}(\max_{0\leq l \leq k} S_k^w \geq \frac{-\lambda}{4}k^{1/2}) \\
& \leq 2C_0 \mathbb{P}(S_k \geq \frac{-\lambda}{4}k^{1/2})
\end{align}
where the second inequality is implied by the reflection principle. 
Hoeffding's inequality then implies the desired inequality:
\begin{displaymath}
 \mathbb{P}(\max_{0\leq l \leq k} S_k^b \geq \frac{-\lambda}{4}k^{1/2}) \leq
C\exp(-c\lambda^2)
\end{displaymath} 
\end{proof}

Next, we shall consider the potential problem of a path staying on the
same edge too many times.  Again, the probability of this event is
very small:

\begin{prop} \label{prop:ver}
 Let $S_l, 0\leq l \leq 2k$ be the Bernoulli excursion as above.  Let $T_i$
denote the number of $l$ such that $S_l = i$, for $i = 1,\hdots,2k$, then
\begin{displaymath}
 \mathbb{P}(\max_i T_i \geq k^{1/2+\epsilon}) \leq C\exp(-c
k^{2\epsilon^{\prime}})
\end{displaymath}
for any $\epsilon > \epsilon^{\prime} > 0$ and some constants $C, c$ independent
of $k,\lambda$.
\end{prop}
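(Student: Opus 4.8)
The plan is to union-bound over levels and then, for each fixed level, collapse the Bernoulli excursion down to a simple random walk via Lemmas \ref{lem:BEBB} and \ref{lem:BBRW}, at which point $T_i$ becomes (up to harmless corrections) the number of visits of a simple random walk to a fixed site, whose upper tail can be controlled by a renewal estimate. Since an excursion of length $2k$ only takes heights in $\{0,1,\dots,k\}$, there are $O(k)$ levels $i$ with $T_i>0$, so it suffices to prove $\mathbb{P}(T_i \ge k^{1/2+\epsilon}) \le C\exp(-c\,k^{2\epsilon''})$ uniformly in $i$ for some $\epsilon''$ with $\epsilon'<\epsilon''<\epsilon$; the polynomial factor from the union bound is then absorbed into a slightly smaller exponent. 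Fix $i$ and split $T_i = T_i^- + T_i^+$ according to whether the visit $S_l = i$ occurs at a time $l \le k$ or $l > k$. If $T_i \ge k^{1/2+\epsilon}$ then one of the two halves is at least $\tfrac12 k^{1/2+\epsilon}$, and by the reversal symmetry $S_l \mapsto S_{2k-l}$ of the excursion it is enough to bound $\mathbb{P}(T_i^- \ge \tfrac12 k^{1/2+\epsilon})$.

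Now $T_i^-$ is $\mathcal{F}_k$-measurable, so I can apply Lemma \ref{lem:BEBB} (which identifies the excursion with a Bernoulli bridge, the local times matching after a vertical shift up to an additive $O(1)$) followed by Lemma \ref{lem:BBRW} to pass to a simple random walk $S$: up to a constant factor and a further union over the $O(k)$ possible shifted levels, it suffices to bound, uniformly in $i'$, the probability that $S$ visits the site $i'$ at least $a := \tfrac12 k^{1/2+\epsilon} - O(1)$ times during its first $k$ steps. By the strong Markov property at the first hitting time of $i'$ together with translation invariance, this probability is at most $\mathbb{P}(L_k \ge a - 1)$, where $L_k := \#\{0\le l \le k : S_l = 0\}$ for the walk started at $0$.

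The heart of the matter is the estimate for $L_k$. Writing $\rho_1,\rho_2,\dots$ for the i.i.d. successive return times of $S$ to $0$, one has $\{L_k \ge N\} = \{\rho_1+\cdots+\rho_{N-1}\le k\}$, and the first-return time of the simple random walk satisfies $\mathbb{P}(\rho_j > t) \asymp t^{-1/2}$. Since $\rho_j$ has infinite mean I cannot use a law of large numbers directly; instead I truncate at an intermediate scale $A = k^{1-2\epsilon+\delta}$ for a small $\delta>0$, set $\bar\rho_j := \min(\rho_j, A)$, and note that $\sum_{j<N}\rho_j \le k$ forces $\sum_{j<N}\bar\rho_j \le k$. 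A direct computation gives $\mathbb{E}[\bar\rho_j] \asymp A^{1/2}$ and $\mathbb{E}[\bar\rho_j^2] \asymp A^{3/2}$, so with $N-1 \asymp k^{1/2+\epsilon}$ the mean $\mu := \mathbb{E}\big[\sum_{j<N}\bar\rho_j\big] \asymp k^{1+\delta/2}$ is much larger than $k$. Bernstein's inequality for the bounded i.i.d. sum $\sum_{j<N}\bar\rho_j \in [0,(N-1)A]$ then gives
\[
  \mathbb{P}\Big(\sum_{j<N}\bar\rho_j \le k\Big) \;\le\; \mathbb{P}\Big(\sum_{j<N}\bar\rho_j - \mu \le -\tfrac{\mu}{2}\Big) \;\le\; \exp\!\Big(-\,\frac{\mu^2/8}{\,(N-1)A^{3/2} + A\mu/6\,}\Big) \;\le\; \exp\!\big(-c\,k^{2\epsilon-\delta/2}\big),
\]
since both terms in the denominator are of order $k^{2-2\epsilon+3\delta/2}$ and the numerator is of order $k^{2+\delta}$. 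Choosing $\delta$ small enough that $2\epsilon - \delta/2 > 2\epsilon''$ finishes the bound for a fixed level, and reinstating the two union bounds over $O(k)$ levels (replacing $\epsilon''$ by $\epsilon'$ at the cost of a smaller constant) yields the proposition.

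The step I expect to be the main obstacle is this last renewal estimate: the return time has only a $t^{-1/2}$ tail (stable index $1/2$, infinite mean), so one must truncate at precisely the right power of $k$ — too low and the truncated mean no longer beats $k$, too high and the concentration inequality becomes too weak — and only then does a Bernstein/Bennett-type bound produce the stretched-exponential decay $\exp(-ck^{2\epsilon'})$ for every $\epsilon'<\epsilon$. The reductions in the first two paragraphs (union over levels, reversal symmetry, the passage excursion $\to$ bridge $\to$ random walk via Lemmas \ref{lem:BEBB} and \ref{lem:BBRW}, and the strong Markov reduction to the site $0$) are routine, but they must be assembled with some care about the $O(1)$ vertical shifts and the parity and length bookkeeping inherited from Lemma \ref{lem:BEBB}.
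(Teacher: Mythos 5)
Your proof is correct, and its overall skeleton (excursion $\to$ bridge $\to$ random walk via Lemmas \ref{lem:BEBB} and \ref{lem:BBRW}, splitting into two halves, reducing a general level to the origin by the strong Markov property at the first hitting time, and a final union bound over $O(k)$ levels that trades $\epsilon$ for $\epsilon'$) matches the paper's. Where you genuinely diverge is in the core tail estimate for the number of visits to a fixed site. The paper invokes the exact combinatorial formula $q_{2k,r}=2^{-(2k-r)}\binom{2k-r}{k}$ for the distribution of the number of returns to the origin and then appeals to Stirling's formula to get a half-Gaussian limit, from which the $\exp(-ck^{2\epsilon})$ tail is read off (the displayed integral there has a typo in its upper limit, and the uniformity of the Stirling approximation in the relevant range of $r$ is left implicit). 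You instead write $\{L_k\ge N\}$ as $\{\rho_1+\cdots+\rho_{N-1}\le k\}$ for the i.i.d.\ return times with tail $\mathbb{P}(\rho>t)\asymp t^{-1/2}$, truncate at $A=k^{1-2\epsilon+\delta}$, and apply Bernstein's inequality; your exponent arithmetic ($\mu\asymp k^{1+\delta/2}$, denominator $\asymp k^{2-2\epsilon+3\delta/2}$, yielding $\exp(-ck^{2\epsilon-\delta/2})$) checks out. Your route is non-asymptotic and does not rely on the exact return-time distribution, so it transfers to more general walks; the paper's route is shorter given the closed-form formula. The only bookkeeping point to make explicit is that the cut-and-paste bijection of Lemma \ref{lem:BEBB} sends the visits to a single excursion level to visits to (at most) two bridge levels, so the reduction costs a factor of $2$ in the threshold rather than a single additive shift — harmless, but worth stating since it is where the extra union over shifted levels comes from.
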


\begin{proof}
 As in the previous proposition, we shall connect the quantity in question with
a similar quantity of a random walk.  Clearly, since by mapping Dyck paths to
Bernoulli bridges, the times one visits a level is mapped to at most two
different levels, we have
\begin{displaymath}
 \mathbb{P}(\max_i T_i^e \geq \lambda k^{1/2}) \leq \mathbb{P}(2\max_i T_i^b
\geq \lambda k^{1/2})
\end{displaymath}
and as before we split the bridge into two halves
\begin{displaymath}
 \mathbb{P}(2\max_i T_i^b \geq \lambda k^{1/2}) \leq \mathbb{P}(\max_{0\leq
i\leq k} T_i^b \geq \frac{\lambda}{4}k^{1/2}) + \mathbb{P}(\max_{k+1\leq i\leq
2k} T_i^b \geq \frac{\lambda}{4}k^{1/2})
\end{displaymath}
By lemma \ref{lem:BBRW}, it suffices to show that
\begin{displaymath}
 \mathbb{P}(\max_{0\leq i \leq k} T_i^w \geq \frac{\lambda}{4}k^{1/2}) \leq
C\exp(-c\lambda^2)
\end{displaymath}
We will in fact first show that, for each $-k \leq 0 \leq k$,
\begin{displaymath}
 \mathbb{P}(T_i^w \geq \frac{\lambda}{4}k^{1/2}) \leq C\exp(-c\lambda^2)
\end{displaymath}
Let $q_{2k,r}$ be the probability that a random walk of length $2k$ returns to
$0$ exactly $r$ times.  It is well known (e.g. \cite{R}) that
\begin{displaymath}
 q_{2k,r} = \frac{1}{2^{2k-r}}\binom{2k-r}{k}
\end{displaymath}
We can then approximate the probability that there are less than
$k^{1/2+\epsilon}$ returns to $0$ for a random walk of length $2k$ using
Stirling's formula
\begin{displaymath}
 \mathbb{P}(\textrm{there are less than }k^{1/2+\epsilon}\textrm{ returns to
}0) \sim \sqrt{\frac{2}{\pi}}\int_0^{k^{1/2+\epsilon}} e^{-t^2/2}dt
\end{displaymath}
For levels other than $0$, we can condition on the first hitting time and
similarly show that
\begin{displaymath}
 \mathbb{P}(T_i \geq k^{1/2+\epsilon}) \leq C \exp(-ck^{2\epsilon})
\end{displaymath}
Therefore, we have the bound
\begin{displaymath}
 \mathbb{P}(\max_i T_i \geq k^{1/2+\epsilon}) \leq C
\exp(-ck^{2\epsilon^\prime})
\end{displaymath}
for any $\epsilon^{\prime} < \epsilon$ 
\end{proof}

With the two propositions above, we are ready to compute the contribution by
the main paths.  First of all the number of Dyck paths of length $2k$ is given
by the Catalan number $C_k = \frac{1}{k+1}\binom{2k}{k}$.  We have shown that
$\frac{1}{k+1}\binom{2k}{k}(1-o(1))$ of these paths have 'good properties',
i.e. they are not in the exceptional set in either propositions.  The
contribution of each paths is of order $n^k(1+o(1))$ when $k = o(n^{2/3})$.
To see this, we simply compute the contribution
\begin{displaymath}
 (n+i_1)\hdots(n+i_k)
\end{displaymath}
for that particular path.  Not being in the exceptional set in the first
proposition means that $i_j \geq -n^{1/3-\delta}$ for any $\delta > 0$ and not
being in the exceptional set in the second proposition means that $i_j \leq
n^{1/3-\delta}$ for any $\delta > 0$.  Therefore $\sum_j i_j \leq n^{1-\delta}$
and the claim follows.

It remains to show that the other paths do not contribute asymptotically to
the sum.  Let us first tackle the Dyck paths that do not have 'good
properties'.  Consider the paths where $\max_i T_i$ is of the order $q_n$ where
$q_n = p_n^{1/2 + l}$ for some $l>0$, each of the paths will contriubute
$O(n^{p_n}(1+\exp(n^{l})))$.  By the previous proposition, the
probability density of these paths are asymptotically of the order
$O(\exp(-n^{2l-\delta}))$.  Integrating over these paths, we can conclude
that they are asymptotically of $o(n^{p_n})$.  

Lastly we compute the paths where there are at least one loop, i.e., on the path
$\{1,i_1,i_2,\hdots,i_{p^n-1},1\}$, there exists $l$ such that $i_l = i_{l+1}$:

\begin{prop}
The contribution to the sum from paths with $2k$ loops, i.e. paths $\{1,i_1,i_2,\hdots,i_{p_n-1},1\}$ where there exists $l_1,\hdots,\l_{2k}$ such that $i_{l_j}=i_{l_j+1}$ for $j =1,\hdots,2k$ is of $O(C_{p_n/2}n^{\frac{p_n}{2}- k\epsilon})$
\end{prop}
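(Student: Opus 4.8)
The plan is to count, separately, the choice of the ``loop structure'' and the choice of the underlying path, and then to exploit the fact that inserting a self-loop into a walk costs a factor of $n + j$ (from the $\chi$-moments on the diagonal, or rather the re-symmetrized diagonal moments) but \emph{removes two steps} from the available budget, so that a path with many loops behaves like a shorter closed path. Concretely, given a closed path $\{1,i_1,\ldots,i_{p_n-1},1\}$ with $2k$ marked loop-steps $l_1,\ldots,l_{2k}$, deleting those $2k$ steps produces a closed path of length $p_n - 2k$ through the same graph, which (after again removing any loops it may contain) is encoded by a Dyck path of length $p_n-2k$ together with the visited-level data. So I would first set up this deletion map and record how many preimages each reduced path has.

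The key steps, in order: (1) Fix the positions $l_1 < \cdots < l_{2k}$ among the $p_n$ steps: $\binom{p_n}{2k} \le p_n^{2k}$ choices; since $p_n = O(n^{2/3-\epsilon})$ this is at most $n^{(2/3-\epsilon)2k}$, which is the source of the eventual $n^{-k\epsilon}$ gain once it is weighed against the $n^{k}$ we save from the shortened walk. (2) Bound the number of reduced Dyck paths of length $p_n - 2k$ by $C_{(p_n-2k)/2} \le C_{p_n/2}$, and note that the horizontal and vertical control from Propositions \ref{prop:hor} and \ref{prop:ver} applies to the reduced path, so outside an overwhelmingly small exceptional set every visited level $i$ satisfies $|i| \le n^{1/3-\delta}$ and every level is visited at most $n^{1/3-\delta}$ times. (3) Estimate the arithmetic weight of such a path: the $p_n - 2k$ non-loop steps contribute $\prod (n+i_j) = n^{p_n/2 - k}(1+o(1))$ exactly as in the main-term computation, while each of the $2k$ loop steps contributes a diagonal even-moment factor, which is $O(1)$ after the $1/\sqrt\beta$ rescaling (the diagonal entries have variance $2/\beta = O(1)$, not $O(n)$); hence the total weight of one such path is $O(n^{p_n/2 - k})$. (4) Multiply: (number of loop-position choices) $\times$ (number of reduced paths) $\times$ (weight per path) $= O\big(p_n^{2k}\, C_{p_n/2}\, n^{p_n/2 - k}\big) = O\big(C_{p_n/2}\, n^{p_n/2 - k\epsilon}\big)$, using $p_n^{2k} = O(n^{(2/3-\epsilon)\cdot 2k}) = O(n^{(4/3)k} n^{-2\epsilon k})$ against $n^{-k}$ — wait, this needs care, so let me flag it below. (5) Finally add the contribution of the exceptional reduced paths, which is $o(n^{p_n/2})$ by the argument already used for bad Dyck paths, and is therefore swallowed.

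The main obstacle — and the step that needs to be done carefully rather than sketched — is the bookkeeping in step (4): one must check that the exponent of $n$ really comes out to $p_n/2 - k\epsilon$ and not something worse. The naive count $p_n^{2k} n^{p_n/2-k}$ gives $n^{p_n/2} \cdot (p_n^2/n)^{k}$, and since $p_n^2 = O(n^{4/3-2\epsilon})$ this is $n^{p_n/2} \cdot n^{(1/3-2\epsilon)k}$, which is \emph{worse} than $n^{p_n/2}$, so a cruder deletion map is not enough. The fix is to be more economical: a loop step does not choose a fresh level freely — it sits at whatever level the reduced path currently occupies — so the ``cost'' of the $2k$ loop steps is not $p_n^{2k}$ but only the $\binom{p_n}{2k}\le p_n^{2k}$ positional factor combined with the observation that each maximal run of loops at a fixed level $i$ also carries a combinatorial factor already absorbed into $T_i \le n^{1/3-\delta}$; more precisely one pairs the loop-step count against the level-occupation bound so that effectively each loop contributes $O(n^{1/3-\delta})$ ways and a weight $O(1)$, giving $(n^{1/3-\delta})^{2k} n^{p_n/2 - k}$... which still must be reconciled with the claimed bound. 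Getting this accounting tight — almost certainly by noticing that the deletion map is far from surjective and that the correct count of closed paths of length $p_n$ with exactly $2k$ loops is $O(C_{p_n/2} n^{-k(1-o(1))})$-smaller than $C_{p_n/2} n^{p_n/2}$ once the shortened length is used honestly — is the crux, and it is exactly the place where ``the combinatorics is different and simpler'' than in \cite{SS}, since here loops only touch the diagonal and the diagonal variance is $O(1)$.
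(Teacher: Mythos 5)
Your deletion-map decomposition --- remove the $2k$ loop steps, reduce to a Dyck path of length $p_n - 2k$, control it via Propositions \ref{prop:hor} and \ref{prop:ver}, and balance the arithmetic weight $n^{p_n/2-k}$ against the insertion count --- is the same skeleton the paper uses. Your diagnosis that the crude $\binom{p_n}{2k}$ overcounts by a factor $(p_n^2/n)^k = n^{(1/3-2\epsilon)k}$ is also correct. But that is exactly where you stop: you flag the repaired count as needing ``reconciliation'' and call the tightening ``the crux,'' which is an admission that the proof is not complete.

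The missing ingredient is a constraint you mention in passing but never exploit. Each loop step pulls an independent, mean-zero diagonal entry, so the expectation of the product vanishes unless the number of loops sitting at each level (vertex) is \emph{even}. The insertion count is therefore not ``$2k$ free positions in $\{1,\dots,p_n\}$'' but: choose $l$ levels from among the $O(n^{1/3-\epsilon})$ levels the reduced path reaches (Proposition \ref{prop:hor}), choose even multiplicities $2a_1+\cdots+2a_l=2k$, and at each chosen level $i$ choose $2a_i$ of the $T_i = O(n^{1/3-\epsilon})$ visits (Proposition \ref{prop:ver}); this is the content of the bound (\ref{eqn:loop}), roughly $\binom{n^{1/3-\epsilon}}{l}\prod_i\binom{T_i}{2a_i}$. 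After checking that the $l=k$, $a_i\equiv 1$ partition dominates, one gets at most $n^{k-ck\epsilon}$ configurations; multiplied by $n^{p_n/2-k}$ and by $C_{(p_n-2k)/2}\le C_{p_n/2}$ this yields the stated $O(C_{p_n/2}n^{p_n/2-k\epsilon})$. Your proposed replacement $(n^{1/3-\delta})^{2k}$ is not derived from this constraint and so remains an assertion, not a count. Two subsidiary gaps: you assign each loop a weight $O(1)$, but $2a_i$ loops piled on one level contribute the Gaussian moment $(2a_i-1)!!$, which grows with $a_i$ and must be shown (as the paper does) not to overwhelm the combinatorial gain from taking $l<k$; and the exceptional reduced paths have to be discarded with overwhelming probability as in the loop-free analysis --- you gesture at this in step (5), which is fine, but the main count above is what actually needs to be supplied.
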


\begin{proof}
First notice that the number of loops must be even as the odds moments of a Gaussian random variable are zero.  As in proposition (\ref{prop:ver}), we let $T_i$ denote the number of times the path stays on level $i$.  The number of paths with $2k$ loops are bounded by
\begin{equation}\label{eqn:loop}
\frac{1}{\frac{p_n}{2}-k+1}\binom{p_n-2k}{\frac{p_n}{2}-k} \sum_{2a_1,\hdots,2a_l: \sum_1^l 2a_i = 2k} \binom{T_i}{2a_i},
\end{equation}
since the loops have to be paired on the 'same level' and removing the loops give a Dyck path of length $p_n-2k$.  With overwhelming
probability, each path contributes $O(n^{\frac{p_n}{2}-k}\prod_i(2a_i-1)!!)$.  Indeed, if we remove the $2k$ loops, it reduces to a Dyck path that the earlier analysis applies, hence the factor $n^{\frac{p_n}{2}-k}$ and each loop on the same level contributes the $2a_i$-th moment of the Gaussian random variable, which is of the order $(2a_i-1)!!$.  For a given $k$, the contribution from the terms in equation (\ref{eqn:loop}) is dominated by the term where $l=k$ and $a_1=\hdots=a_l=2$ (one way to see this is that this is a similar argument as the one which shows that trace of moments of Wigner matrices are dominated by paths that traverse each edge twice after considering only the contribution from the loops, see e.g. \cite{SS}, the key here being the moments of Gaussian does not grow too fast; another way is to calculate the contribution when $l < k$.  This is bounded by $\binom{n^{1/3-\epsilon}}{l} l^{k-l} n^{2k/3-k\epsilon} (\lceil\frac{k}{l}\rceil!!)^l$, the binomial term coming from picking $l$ levels for the loops, the second term being each level must be filled by at least two loops, and the remaining loops can be anywhere, the third term is just the maximum possible of $\prod_i\binom{T_i}{a_i} (2a_i-1)!!$ and as explained below, $\max_iT_i < n^{1/3-\epsilon}$.  Summing over all $l<k$ yields the desired dominance.) and is bounded by $n^{k/3-k\epsilon}n^{2k/3-k\epsilon}$ for $p_n = O(n^{2/3-\epsilon})$, since we have $\max_i T_i = O(n^{1/3-\epsilon})$ with overwhelming probability and therefore
$\binom{T_i}{a_i} = O(n^{a_i/3-\epsilon})$ with overwhelming probability and moreover, by
proposition (\ref{prop:hor}), the number of different $T_i$ to choose from in the sum is of $O(n^{1/3-\epsilon})$ with overwhelming probability and so the number of choice is bounded by $n^{k/3-k\epsilon}$.  The exceptional paths can be dealt with similar to the ones without loops.  Using the trivial bound of $C_{p_n/2-k}<C_{p_n}$ result in the proposition.  
\end{proof} 

If we sum over all $k\geq 2$ in the proposition above, we found that the contribution from the paths with loops is of the order $O(C_{p_n/2}n^{p_n/2-\epsilon})$.

Combining all of the above, we have that
$\mathbb{E}(A^{p_n}_{n\beta})_{11}$ is given by 
\begin{displaymath}
 C_{p_n/2} n^{p_n/2} (1+o(1))
\end{displaymath}
when $p_n$ is even and $0$ when $p_n$ is odd.  Here $C_k$ denotes the k-th
Catalan number, which counts the number of Dyck paths between $0$ and $2k$.  It
is well known that:
\begin{displaymath}
 C_k \sim \frac{4^k}{k^{3/2}\sqrt{\pi}}
\end{displaymath}
 and the result follows from lemma \ref{lem:simp} after we rescaled by
$2^{p_n}n^{p_n/2}$.

\section{Conclusion}
In this article we prove a local semicircle law at the edge of the spectrum for
$\beta$-ensembles.  The proof relies on calculations of moments and can be
applied to other tridiagonal matrices satisfying fairly mild conditions: that
the diagonal entries remain $O(1)$ and the growth of the moments of the off
diagonal entries is not too fast.  It would be interesting to consider the same
question for the $\beta$-Laguerre ensembles.

\section{Acknowledgements}
 The author would like to thank his advisor Prof. Ya. G. Sinai for his
suggestion of the problem and his continual support and Prof. Alice Guillonet for providing useful remarks and discussions.  Finally, the author would like to thank the referee for pointing out various typos and making a number of suggestions to improve the article.

\section{Appendix}
We shall provide the derivation of
theorem \ref{thm:main} from theorem \ref{thm:mom} for the readers' convenience.

\begin{proof}{Proof of theorem \ref{thm:main}}
 To prove the theorem, it suffices to establish the convergence of the Laplace
transform of the measures $\mu_n$ to that of $\mu$ (see e.g. \cite{F2} for the relationship between convergence of Laplace transform and convergence in distribution):
\begin{displaymath}
 \int_{-\infty}^\infty e^{-c\theta}d\mu_n(\theta) =
\frac{1}{nr_n^{3/2}}\sum_{k=1}^n e^{-c\theta_k}
\rightarrow_{n\rightarrow\infty} \int_0^\infty
e^{-c\theta}\frac{2\sqrt{2}}{\pi} \sqrt{\theta} d\theta = \sqrt{\frac{2}{\pi
c^3}}
\end{displaymath}
Let $s_n = cr_n^{-1}/2$ and $p_n = 2s_n$.  We claim that
the sequence $n^{-1} r_n^{3/2} \textrm{Tr} A^{2s_n}$ converges in
probability to 
\begin{displaymath}
 \lim_{n\rightarrow\infty} \frac{1}{nr_n^{3/2}} \mathbb{E}(\textrm{Tr}
A^{2s_n}) =
\lim_{n\rightarrow\infty} \frac{1}{nr_n^{3/2}} \frac{n}{\sqrt{\pi s_n^{3}}} =
\frac{2\sqrt{2}}{\sqrt{\pi c^3}}
\end{displaymath}
This follows from Chebyshev's inequality once we show that the variance of
$n^{-1} r_n^{3/2} \textrm{Tr} A^{2s_n}$ is of $o(1)$.  This is true as:
\begin{align} \nonumber
 \textrm{var}(n^{-1} r_n^{3/2} \textrm{Tr} A^{2s_n})& =
n^{-2}r_n^{-3} \textrm{var}(\textrm{Tr} A^{2s_n})\\ \nonumber
& \leq n^{-1}r_n^{-3} \mathbb{E}((\textrm{Tr} A^{2s_n})^2) \leq
2n^{-1}r_n^{-3}\mathbb{E}(\textrm{Tr}
A^{4s_n})
\end{align}
By theorem \ref{thm:mom}, the last quantity is of $o(1)$.  Similarly,
$n^{-1}r_n^{-3/2} \textrm{Tr} A^{2s_n+1}$ converges in probability to
zero.
Let 
\begin{align}
 \lambda_k = 1 - \theta_k r_n & \lambda_k \geq 0 \\
 \lambda_j = -1 + \tau_j r_n & \lambda_j \leq 0
\end{align}
Then by definition, we have
\begin{equation}\label{eqn:tr1}
 \textrm{Tr} A^{2s_n} = \sum_k (1-r_n \theta_k)^{2((c/2)r_n^{-1})} +
\sum_j
(1-\tau_jr_n)^{2((c/2)r_n^{-1})}
\end{equation}
and
\begin{equation}\label{eqn:tr2}
 \textrm{Tr} A^{2s_n+1} = \sum_k (1-r_n \theta_k)^{2((c/2)r_n^{-1})+1} -
\sum_j
(1-\tau_jr_n)^{2((c/2)r_n^{-1})+1}
\end{equation}
If $|\theta_k| \leq r_n^{-1/3}$ and $|\tau_j| \leq r_n^{-1/3}$, the
corresponding terms in (\ref{eqn:tr1}) and (\ref{eqn:tr2}) are
$e^{-c\theta_k}(1+o(r_n^{1/3}))$ and $e^{-c\tau_j}(1+o(r_n^{1/3}))$.  On the
other hand, using the estimate of the expectation of $\textrm{Tr}
A^{4s_n}$, the subsums
in (\ref{eqn:tr1}) and (\ref{eqn:tr2}) over $\theta_k$ and $\tau_j$ such that
$|\theta_k| > r_n^{1/3}$ and $|\tau_j| > r_n^{1/3}$ converge to zero in
probability.  The same holds for the subsums over these $\theta_k$ and $\tau_j$
of $\sum_k e^{-c\theta_k}$ and $\sum_j e^{-c\tau_j}$.  Therefore, we have
\begin{displaymath}
 (\textrm{Tr} A^{2_sn} + \textrm{Tr} A^{2s_n+1} - 2\sum_k
e^{-c\theta_k})\frac{1}{nr_n^{3/2}}
\rightarrow 0
\end{displaymath}
and
\begin{displaymath}
 \frac{1}{nr_n^{3/2}}\sum_k e^{-c\theta_k} \rightarrow \sqrt{\frac{2}{\pi c^3}}
\end{displaymath}
If $r_n = n^{-\gamma}$ where $\gamma < 2/3$, then it follows from theorem \ref{thm:mom} that the normalized traces $n^{-1}r_n^{-3/2}\textrm{Tr}A^{2s_n}$ and $n^{-1}r_n^{-3/2}\textrm{Tr}A^{2s_n+1}$ converges to nonrandom limits with probability $1$, and hence the convergence of the measures $\mu_n$ in the main theorem also occurs with probablity $1$. 
\end{proof}


\begin{thebibliography}{100}
\bibitem{AGZ} Anderson, G.W., Guionnet, A. and Zeitouni, O.: An Introduction to
Random Matrices, \emph{Cambridge Univ. Press} (2010)
\bibitem{BS} Bao, Z., and Su, Z.: Local semicircle law and Gaussian fluctuation for Hermite $\beta$-ensemble. (2011) Preprint, arXiv:math.PR/1104.3431
\bibitem{BEY} Bourgarde, P., Erd\"os, L., and Yau, H.-T.: Universality of General $\beta$-ensemble. (2011) Preprint, arXiv:math.PR/1104.2272
\bibitem{D} Dumitriu, I.: Eigenvalue statistics for beta ensembles, \emph{Ph.D.
thesis, MIT} (2003)
\bibitem{DE} Dumitriu, I. and Edelman, A.: Matrix models for beta ensembles,
\emph{J Math. Phys.} \textbf{43} (11), 5830-5847 (2002)
\bibitem{ES} Edelman, A. and Sutton, B.: From random matrices to stochastic
operators \emph{J. Stat. Phys.} \textbf{127} (6), 1121-1165 (2007)
\bibitem{ESY} Erd\"os, L., Schlein, B. and Yau, H.-T.: Semicircle law on short
scales and delocalization of eigenvectors for Wigner random matrices \emph{Ann.
Probab.} \textbf{37}(3), 815-852 (2008)
\bibitem{ESY2} Erd\"os, L., Schlein, B., and Yau, H.-T.: Local semicircle law
and complete delocalization for Wigner random matrices. \emph{Commun. Math.
Phys.} \textbf{287}, 641-655 (2009)
\bibitem{F2} Feller, W., An introduction to Probability Theory and its Applications, Vol. 2 \emph{Wiley and Sons} (1971)
\bibitem{F} Forrester, P.: Log-gases and Random Matrices, \emph{Princeton Univ.
Press} (2010) 
\bibitem{KM} Khorunzhiy, O. and Marckert, J.-F.: Uniform bounds for exponential
moment of maximum of a dyck path, \emph{Elect. Comm. in Probab.} \textbf{14}
 327-333 (2009)
\bibitem{R} Revesz, P.: Random walk in random and non-random environments,
\emph{World Scientific} (2005)
\bibitem{RRV} Ram\'irez, J., Rider, B. and Virag, B: Beta ensembles, stochastic
Airy spectrum and a diffusion.  (2007) Preprint, arXiv:math.PR/0607331.
\bibitem{SS} Sinai, Ya. G. and Soshnikov, A.B.: A refinement of Wigner's
semicircle law in a neighborhood of the spectrum edge for random symmetric
matrices, \emph{Funct. Anal. and Its Appl.} \textbf{32}(2) (1998)
\bibitem{S} Soshnikov, A.B.: Universality at the edge of the spectrum in Wigner
random matrices, \emph{Comm. Math. Phys.} \textbf{207} 697-733 (1999) 
\bibitem{TV} Tao, T. and Vu, V.: Random matrices: Universality of local
eigenvalue statistics, (2009) Preprint, arXiv:0906.0510v10
\bibitem{T} Trotter, H.: Eigenvalue distributions of large Hermitian matrices;
Wigner's semicircle law and a theorem of Kac, Murdoch and Szego. \emph{Adv. in
Math.} \textbf{54}(1) 67-82 (1984) 
\bibitem{TW} Tracy, C., and Widom, H.: Level spacing distributions and the Airy
kernel. \emph{Comm. Math. Phys.} \textbf{159}(1) 151-174 (1994) 
\bibitem{TW2} Tracy, C., and Widom, H.: On orthogonal and symplectic matrix
ensembles. \emph{Comm. Math. Phys.} \textbf{177}(3) 727-754 (1996)  
\end{thebibliography}
\end{document}